\newcommand{\rev}[1]{{\color{black}#1}}
\newcommand{\mat}[1]{\ensuremath{{\bm{{#1}}}}}
\renewcommand{\vec}[1]{%
	\ifcat\relax\noexpand#1%
	\ensuremath{\boldsymbol{\lowercase{#1}}}%
	\else
	\ensuremath{\mathbf{\lowercase{#1}}}%
	\fi
}
\newcommand{\R}{\ensuremath{\mathbb{R}}}
\newcommand{\T}{\mathsmaller{\text{T}}}
\newcommand{\kron}{\otimes}
\newcommand{\Kr}{\mathscr K}
\newcommand{\bX}{\bm X}
\newcommand{\bR}{\bm R}
\newcommand{\bM}{\bm M}
\newcommand{\bN}{\bm N}
\newcommand{\bA}{\bm A}
\newcommand{\bE}{\bm E}
\newcommand{\bH}{\bm H}
\newcommand{\bC}{\bm C}
\newcommand{\bB}{\bm B}
\newcommand{\bD}{\bm D}
\newcommand{\bF}{\bm F}
\newcommand{\bG}{\bm G}
\newcommand{\bP}{\bm P}
\newcommand{\bY}{\bm Y}
\newcommand{\bZ}{\bm Z}
\newcommand{\sscg}{{{\sc s}{\sc \small s}--{\sc {cg}}}}
\newcommand{\sspcg}{{\sc s}{\sc \scriptsize s}--{\sc {cg}}}
\newcommand{\cg}{{\sc cg}}
\newcommand{\tpcg}{{\sc tpcg}}
\theoremstyle{remark}
\newtheorem{remark}[theorem]{Remark}
\newtheorem{example}[theorem]{example}
\definecolor{matlabred}{rgb}{0.9047,    0.1918,    0.1988}
\definecolor{matlabblue}{rgb}{0.2941    0.5447    0.7494}
\definecolor{matlabgreen}{rgb}{	0.3718    0.7176    0.3612}
\definecolor{matlaborange}{rgb}{1.0000    0.5482    0.1000}
\algnewcommand{\LineComment}[1]{\Statex \(\%\) \small \textit{#1} \(\%\)}
\Crefname{ALC@unique}{Line}{Lines}
\title{A subspace-conjugate gradient method for linear matrix equations%
\thanks{Version of \today}}
\author{Davide Palitta\thanks{Dipartimento di Matematica and (AM)$^2$,
Alma Mater Studiorum Universit\`a di Bologna,
Piazza di Porta San Donato  5, I-40127 Bologna, Italy,
{\tt \{davide.palitta\}\{martina.iannacito\}\{valeria.simoncini\}@unibo.it}}
 \and Martina Iannacito$^\dagger$
 \and Valeria Simoncini$^\dagger$\thanks{IMATI-CNR, Pavia, Italy. }}
\begin{document}
\maketitle

\renewcommand{\thefootnote}{\fnsymbol{footnote}}
\maketitle \pagestyle{myheadings} \thispagestyle{plain}
\markboth{ D.\ PALITTA, M.\ IANNACITO, V.\ SIMONCINI}{SUBSPACE-CONJUGATE GRADIENT METHOD FOR MATRIX EQUATIONS}


\begin{abstract}
The efficient solution of large-scale
multiterm linear matrix equations is a challenging task 
in numerical linear algebra, and it is a largely open problem.
We propose a new iterative scheme for symmetric and positive
definite operators, significantly advancing methods such as truncated 
matrix-oriented Conjugate Gradients ({\sc cg}). The new algorithm 
capitalizes on the low-rank matrix format of its iterates by fully exploiting
the subspace information of the factors as iterations proceed.
The approach implicitly relies on orthogonality conditions imposed over
much larger subspaces than in {\sc cg}, unveiling insightful
 connections with subspace projection methods.
The new method is also equipped with memory-saving strategies.
In particular, we show that
for a given matrix $\bY$, the action ${\cal L}(\bY)$ in low rank format
 may not be evaluated exactly due to memory constraints.  
This problem is often underestimated,
though it will eventually produce Out-of-Memory breakdowns for a sufficiently large
number of terms. We propose an ad-hoc 
randomized range-finding strategy that appears to fully resolve this shortcoming.

Experimental results with typical application problems
illustrate the potential of our approach over various methods developed
in the recent literature.
\end{abstract}

\begin{keywords}
Multiterm matrix equations, conjugate gradient, Galerkin condition.
\end{keywords}

\begin{MSCcodes}
 	65F45, 65F25, 65F99
\end{MSCcodes}


\section{Introduction}
We are interested in the numerical solution of the problem
\begin{equation}\label{eqn:main}
\bA_1 \bX \bB_1 + \ldots + \bA_\ell \bX \bB_\ell = \bC,
\end{equation}
where $\bA_i\in\R^{n_A\times n_A}$, $\bB_i\in\R^{n_B\times n_B}$,
$i=1, \ldots, \ell$, are symmetric matrices,
and $\mat{C}\in\R^{n_A\times n_B}$ has rank $s_C\ll \min\{n_A, n_B\}$ so that we can write $\bC=C_1C_2^\T$, $C_1\in\mathbb{R}^{n_A\times s_C}$, $C_2\in\mathbb{R}^{n_B\times s_C}$.
{By introducing the linear operator}
${\cal L}(\bX) = \bA_1 \bX \bB_1 + \ldots + \bA_\ell \bX \bB_\ell$,
in the following we will use the more compact notation
$$
{\cal L}(\bX) = \bC
$$
for the matrix equation above. 
%
%
We assume that $\cal L$ is positive definite in the matrix inner product, that is
it holds that $\langle \bX, {\cal L}(\bX)\rangle >0$ for any nonzero
$\bX\in\mathbb{R}^{n_A\times n_B}$.
Given two $n\times m$ matrices ${\bm X}, {\bm Y}$, we define the matrix inner product as
$$
\langle {\bm X}, {\bm Y} \rangle = {\rm trace}({\bm X}^\T {\bm Y}).
$$
This inner product defines the Frobenius norm $\|\bX\|_F^2 =
\langle {\bm X}, {\bm X} \rangle$.

 Encountered samples of $\cal L$ include
for instance the generalized Lyapunov equation 
$\mathcal{L}(\mat{X}) = \mat{AXE^\T} + \mat{EXA}^{\T}$
and its multiterm counterpart $\mathcal{L}(\mat{X}) = \bA \bX\bE^\T  + \bE \bX \bA^{\T} + \mat{MXM}^{\T}$
with $\mat{A}, \mat{E}, \mat{M}\in\R^{n\times n}$, as they occur in control theory
\cite{Benner.Damm.11},\cite[Ch.6]{BCOW.17}; 
in our setting we have the additional hypothesis that all coefficient
matrices are symmetric.
In the following we will call a multiterm Lyapunov equation an equation as in~\eqref{eqn:main} where $\bC$ is symmetric and $\mathcal{L}$ is such that $(\mathcal{L}(\bX))^{\T}=\mathcal{L}(\bX)$ for any symmetric $\bX$. This 
implies that the solution to a multiterm Lyapunov equation is symmetric.

More general forms typically arise whenever {the terms on the left and right of the unknown} have different meaning in the original application,
such as geometric space vs time (see, e.g., \cite{Urbanetal.enumath.22},\cite{LOLI20202586}),
or geometric space vs
parameter space (e.g., \cite{Powelletal2017},\cite{Benner.Onwunta.Stoll.15}), or
optimization (e.g., \cite{Dolgov.Stoll.17},\cite{Stoll.Breiten.15}).
We will refer to this general latter structure as multiterm Sylvester equation\footnote{Often
the term ``generalized Sylvester'' is used for the same equation. The term
``generalized'' is also employed for two-term equations, for rectangular problems, and some
multi-variable contexts. To avoid ambiguity we prefer to use the name ``multiterm Sylvester''.}.

Many different solid methods for the solution of 
equation~\eqref{eqn:main} for $\ell= 2$ have been devised in the
past two decades (see, e.g., the survey~\cite{Simoncini2016}).
On the other hand, 
having a number of terms $\ell> 2$ in~\eqref{eqn:main} makes the numerical 
treatment of this equation extremely challenging. Fewer options are 
available in the literature for medium up to large dimensions of the
coefficient matrices. \rev{In particular, to the best of our knowledge, no decomposition-based method able to simultaneously triangularize $\ell>2$ matrices is available in the literature
so that, up to date, recasting the problem in terms of its Kronecker form is the only option to get a direct solution.
However, this strategy suffers from excessive
memory constraints and computational cost even for moderate dimensions of the coefficient matrices,
so that only iterative procedures for the solution of~\eqref{eqn:main} are being explored.}
Among the classes of contributions in this direction are
matrix-oriented Krylov methods with low-rank 
truncations~
\cite{KressnerTobler2011},%
\cite{Stoll.Breiten.15},\cite{SimonciniHao2023},\cite{PalittaKuerschner2021},
 projection methods tailored to the equation at 
hand~\cite{Jarlebringetal2018},\cite{SimonciniHao2023},\cite{Powelletal2017}, 
fixed-point iterations  \cite{Damm.08},\cite{Shanketal.16},
Riemannian optimization schemes~\cite{Biolietal2024}, and 
greedy procedures \cite{KressnerSirk2015}.

In the following we focus our attention on short recurrences associated
with matrix-oriented Krylov methods.
These schemes amount to adapting standard Krylov schemes for 
linear systems to matrix equations by leveraging 
the equivalence between~\eqref{eqn:main} and its Kronecker form. 
Thanks to our hypotheses on $\mathcal{L}$, the coefficient matrix of 
the linear system in Kronecker form is symmetric positive definite so that the 
Conjugate Gradient method (\cg) can be applied; 
see, e.g.,~\cite{KressnerTobler2011},\cite{SimonciniHao2023},\cite{BennerBreiten2013} and 
section~\ref{Truncated matrix-oriented CG} for more details.
By building upon the low rank structure of the right-hand side,
matrix-oriented \cg\ generates matrix recurrences, rather than
vector recurrences, in {\it factored} form, thus allowing high memory
and computational savings, while retaining the optimality properties
when brought back to the vectorized form. 
Unfortunately, as the iterations progress, recurrence factors may quickly
increase their rank, losing the advantages of the whole matrix-oriented procedure.
Rank truncation strategies of the factor iterates are usually enforced
so as to keep memory allocations under control. As a side effect, however,
convergence is often delayed, also possibly leading to stagnation 
\cite{KressnerTobler2011},\cite{Kressner.Plesinger.Tobler.14},\cite{SimonciniHao2023}.

By taking inspiration from this class of methods, we design a
new iterative scheme for the solution of~\eqref{eqn:main} that better
exploits the rich subspace information obtained
with the computed quantities, to define the next factorized iterates.
More precisely, at each iteration the next approximate solution and
direction are obtained by imposing a functional optimality with respect
to the whole range of the low rank factors available in the current iteration.
This should be compared with the approximate solution in matrix-oriented \cg,
obtained at each iteration by a functional optimality with respect to a single vector.
 
{The idea appears to be new, as} it goes far beyond
the algorithmic developments 
usually associated with matrix-oriented approaches. While being closer to
optimization procedures based on manifolds, it does not share the same complexity
in the definition of the funding recurrences, as the new method is still fully derived
from the original Conjugate Gradient method for linear systems.
Truncation strategies are devised to maintain the computed matrix iterates low rank.

In designing the new approach we address a memory allocation issue that becomes crucial
when the number $\ell$ of terms in (\ref{eqn:main}) is significantly larger than two.
More precisely, for a given matrix $\bY$, the action ${\cal L}(\bY)$ in low rank format
 may not be evaluated exactly, making it impossible to generate quantities such as
the residual matrix. This problem is often underestimated in the current literature, 
though it will eventually produce Out-of-Memory breakdowns in actual computations, for
$\ell$ large enough. We propose an ad-hoc 
randomized range-finding strategy that appears to fully resolve this shortcoming, 
keeping the memory allocations under control.

We name the new method the preconditioned {\it subspace-conjugate gradient method}
(\sscg)
to emphasize the role of subspaces in the recurrences.
This novel point of view leads to remarkable computational 
gains making \sscg\ a very competitive option for the solution of
multiterm linear matrix equations of the form~\eqref{eqn:main}.
Computational experiments with a selection of quite diverse problems illustrate
the potential of the new strategy, when compared with other methods specifically
designed for the considered matrix equations.

Here is a synopsis of the paper. After introducing the notation we use throughout the paper in section~\ref{Notation}, in section~\ref{Truncated matrix-oriented CG} we recall the matrix-oriented \cg\ method for~\eqref{eqn:main}. Section~\ref{sec:subspace-CG} sees the derivation of the subspace-conjugate gradient method, the main contribution of this paper, and in section~\ref{A first version  of the algorithm for the
multiterm Lyapunov equation} we illustrate a first pseudoalgorithm for multiterm Lyapunov equations. Some theoretical aspects of the novel procedure are studied in section~\ref{sec:discussion}. In section~\ref{sec:sylv} we generalize our method to the solution of multiterm Sylvester equations. 
We then present several memory- and time-saving
strategies:
we discuss low-rank truncations and
effective residual computation in section~\ref{sec:truncation}, 
and the employment of inexact coefficients in section~\ref{Inaccurate coefficients}.
Section~\ref{sec:precond} dwells with the inclusion of preconditioning strategies.
The resulting algorithm for multiterm Sylvester matrix equations
is summarized in section~\ref{sec:final_algo}.
Numerical results illustrating the competitiveness of our new procedure in solving multiterm matrix equations are presented in section~\ref{Numerical results}.
Conclusions are depicted in section~\ref{Conclusions}.
The Appendix collects some of the discussed algorithms.

\subsection{Notation}\label{Notation}
Throughout the paper, capital, bold letters ($\bX$) will denote $n_A\times n_B$ matrices,
 with capital letters ($X$) denoting their possibly low-rank factors, e.g., $\bX=X_1X_2^\T$. We already mention 
here that, for the sake of the presentation, we will often use the 
notation $\bX$ for our iterates, although 
we will operate with their low-rank factors only, without allocating
these matrices as full.
 See section~\ref{sec:subspace-CG} for further details.

Greek letters ($\alpha$) will denote scalars whereas bold Greek letters ($\bm\alpha$) 
will be used for small dimensional matrices.
Moreover, $\text{blkdiag}(\bm{\alpha}_1, \ldots,\bm{\alpha}_s)$ 
denotes the block diagonal matrix having on the diagonal the matrices 
$\bm{\alpha}_1, \ldots,\bm{\alpha}_s$.
The symbol $\otimes$ denotes the Kronecker product whereas $\text{vec}(\cdot)$ is the 
operator that stacks the columns of a matrix one below the other.
For a matrix $\bX$, $\text{range}(\bX)$ is the space spanned by the
columns of $\bX$. 


\section{Truncated matrix-oriented CG}\label{Truncated matrix-oriented CG}
This section is devoted to surveying the well exercised
matrix-oriented version of the Conjugate Gradient method, together
with its truncated variant. 

When addressing the solution of (\ref{eqn:main}), the 
Kronecker formulation of the problem, namely
\begin{equation}\label{eqn:kron}
(\bB_1^\T \otimes \bA_1  + \ldots + \bB_\ell^\T \otimes \bA_\ell )
 {\rm vec }(\bX)
= {\rm vec }(\bC) \quad\Leftrightarrow \quad
{\cal A} x = c,
\end{equation}
allows one to directly employ the classical Preconditioned
Conjugate Gradient ({\sc pcg}) method. A careful implementation should avoid the
explicit construction of $\cal A$, so that matrix-vector products
can be carried out in the original matrix form with the
${\cal L}$ operator.  
Even with this precaution, all vector iterates still
have $n_A n_B$ components, so that whenever
$n_A, n_B$ are large, memory allocations may become prohibitive.
A particularly convenient way out occurs when $\bC$ is very low rank.
In this case, under certain conditions, the \rev{exact} solution $\bX$ may also
be well approximated by a low rank matrix 
\cite{Beckermann.Kressner.Tobler.13},%
\cite{BennerBreiten2013},%
\cite{Grubisic.Kressner.14},%
\cite{KressnerTobler2011}.
To exploit this characterization, all vector iterates are transformed
back to matrices and kept in low-rank matrix format.
Unfortunately, although  during the first few {\sc pcg} iterations
the iterates maintain low rank, the rank itself grows  as the method
proceeds.
To control the memory requirements of the procedure after the first
few iterations, truncation of the iterate factors are usually performed.
{We refer to, e.g., 
\cite[Algorithm 2]{KressnerTobler2011}, \cite{SimonciniHao2023} for the
algorithmic description.}

As long as no low-rank truncations are performed, 
{truncated {\sc pcg}}
is mathematically equivalent to applying the 
standard, \emph{vectorized} \cg\ method to the linear system
stemming from~\eqref{eqn:main} via the Kronecker form in (\ref{eqn:kron}). 
Implementing low-rank truncations may be viewed as a simple computational device
to make the solution process affordable in terms of storage allocation and not 
as an algorithmic advance.
The final attainable accuracy when truncation is in place depends on the truncation
tolerance and on the decay of the singular values in the problem solution matrix; we
refer to  
\cite{Beckermann.Kressner.Tobler.13},\cite{Kressner.Plesinger.Tobler.14},\cite{SimonciniHao2023}
for a detailed discussion on the effect of truncation.

\section{The \sscg\ method for the multiterm Lyapunov equation}\label{sec:subspace-CG}
In this section we derive our new method for the multiterm Lyapunov equation,
that is we assume that ${\cal L}(\bX)={\cal L}(\bX)^\T$ for any symmetric $\bX$, and
that $\bC$ is symmetric. In section~\ref{sec:sylv} we will discuss the changes occurring
when generalizing the procedure to the multiterm Sylvester case.  

The key idea is to
build a Conjugate Gradient type method remaining in $\R^n$, 
while generating quantities based on {\it subspaces} of $\R^n$, rather than
on {\it vectors} of $\R^{n^2}$, the way the original {truncated {\sc pcg}} does.

We follow the classical derivation of \cg\ as a procedure to approximate
the minimizer  of a convex function. 
Let the function $\Phi:\R^{n\times n}\rightarrow\R$ be defined as
\begin{equation} \label{eq:Phi:X}
\Phi(\mat{X}) =
\frac 1 2 \langle \mat{X}, \mathcal{L}\bigl(\mat{X}\bigr)\rangle -
\langle \mat{X}, \mat{C} \rangle.
\end{equation}
We then consider the following minimization problem: Find $\mat{X}\in\R^{n\times n}$ such that
$$
\bX ={\rm arg} \min_{\mat{X}\in\R^{n\times n}}\Phi(\mat{X}).
$$
%
Starting with a zero initial guess $\mat{X}_0\in\R^{n\times n}$ and
${P}_0\in\R^{n\times s_C}$,  where we recall that $s_C$ is the
rank of $\bC$, we define the recurrence $\{\bX_k\}_{k\ge 0}$
of approximate solutions by means of the following relation
\begin{equation} \label{eq:update:X}
	\mat{X}_{k+1} = \mat{X}_k + {P}_k\vec{\alpha}_k{P}_k^{\T},
\end{equation}
where $\vec{\alpha}_k\in\R^{s_k\times s_k}$ and ${P}_k\in\R^{n\times s_k}$,
with corresponding recurrence for the residual
$\mat{R}_{k+1} = \bC - {\cal L}(\mat{X}_{k+1})$, that is
	$\mat{R}_{k+1} = \mat{R}_k - \mathcal{L}({P}_k\vec{\alpha}_k{P}_k^{\T})$.
We emphasize that $s_k$ depends on the iteration index $k$, that is the number of columns of $P_k$ may (and will)
change as the iterations proceed, possibly growing
up  to a certain maximum value, corresponding
to the maximum allowed rank of all iterates.
Since we assume that the operator $\cal L$ is
symmetric, that is ${\cal L}(\bX) = ({\cal L}(\bX))^\T$ for any symmetric matrix $\bX$,
and that $\bC=CC^\T$, all iteration matrices are square and symmetric.
In section~\ref{sec:sylv} we will relax these assumptions,
yielding possibly rectangular solution and iterates.

Like in the vector case, we require that the matrix $\bP_k=P_kP_k^{\T}$
satisfies a descent direction
 requirement completely conforming  to the vector case, that is
\begin{equation}\label{eqn:descent}
\langle\nabla\Phi(\mat{X}_k), \mat{P}_k\rangle < 0 .
\end{equation}
To determine $\vec{\alpha}_k$, we
let $\phi(\vec{\alpha}) = \Phi(\mat{X}_{k} + {P}_k\vec{\alpha}{P}_k^{\T})$.
For given $\mat{X}_{k}, {P}_k$,
at the $k$th iteration we construct $\vec{\alpha}_k$ so that
\begin{equation} \label{eq:min:alpha}
	\phi(\vec{\alpha}_k)=
\min_{\vec{\alpha}\in\R^{s_k\times s_k}}\phi(\vec{\alpha}) .
\end{equation}



The minimizer $\vec{\alpha}_k$ can be explicitly determined by
solving a linear matrix equation of reduced dimensions,
as
described in the following result.

\begin{proposition} \label{prop:alpha}
Assume that $\bA_i$ and $\bB_i$ are symmetric, and
that ${\cal L}$ is positive definite.
The minimizer $\vec{\alpha}_k\in\R^{s_k\times s_k}$ of~\eqref{eq:min:alpha}
is the unique solution of
	\begin{equation} \label{eq:prop:alpha:thesis}
 P_k^T {\cal L}(\mat{X}_k + P_k\vec{\alpha}P_k^T) P_k =
 P_k^T \mat{C} P_k,
        \end{equation}
or, equivalently, of
$P_k^T {\cal  L}(P_k\vec{\alpha}P_k^T) P_k = P_k^T \mat{R}_k P_k$.
\end{proposition}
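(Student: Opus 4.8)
The plan is to exploit the fact that $\Phi$ is a quadratic functional, so that $\phi(\vec\alpha)=\Phi(\mat X_k+P_k\vec\alpha P_k^\T)$ is itself a quadratic functional in the reduced variable $\vec\alpha\in\R^{s_k\times s_k}$, whose minimizer is characterized by a first-order (Galerkin-type) optimality condition. First I would substitute the update into \eqref{eq:Phi:X} and expand using bilinearity of the inner product together with the symmetry of ${\cal L}$, which holds since the $\bA_i,\bB_i$ are symmetric and therefore $\langle \mat Y,{\cal L}(\mat Z)\rangle=\langle {\cal L}(\mat Y),\mat Z\rangle$. The key algebraic device is the reduction identity $\langle P_k\vec\alpha P_k^\T,\mat M\rangle=\langle \vec\alpha, P_k^\T\mat M P_k\rangle$, a direct consequence of the cyclic property of the trace. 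Applying it to the linear part, and recalling $\mat R_k=\mat C-{\cal L}(\mat X_k)$, collapses the cross and linear contributions into $-\langle\vec\alpha,P_k^\T\mat R_kP_k\rangle$, while the purely quadratic contribution becomes $\tfrac12\langle\vec\alpha,\widetilde{\cal L}(\vec\alpha)\rangle$ with reduced operator $\widetilde{\cal L}(\vec\alpha):=P_k^\T{\cal L}(P_k\vec\alpha P_k^\T)P_k$. Thus $\phi(\vec\alpha)=\text{const}+\tfrac12\langle\vec\alpha,\widetilde{\cal L}(\vec\alpha)\rangle-\langle\vec\alpha,P_k^\T\mat R_kP_k\rangle$, exactly the structure of $\Phi$ but in the small variable $\vec\alpha$.

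Next I would compute the first-order condition. The operator $\widetilde{\cal L}$ is a symmetric linear operator on $\R^{s_k\times s_k}$, with symmetry inherited from ${\cal L}$ via $\langle\vec\beta,\widetilde{\cal L}(\vec\alpha)\rangle=\langle P_k\vec\beta P_k^\T,{\cal L}(P_k\vec\alpha P_k^\T)\rangle=\langle\widetilde{\cal L}(\vec\beta),\vec\alpha\rangle$, so differentiation yields $\nabla\phi(\vec\alpha)=\widetilde{\cal L}(\vec\alpha)-P_k^\T\mat R_kP_k$. Setting this to zero gives $P_k^\T{\cal L}(P_k\vec\alpha P_k^\T)P_k=P_k^\T\mat R_kP_k$, which is the second (equivalent) form in the statement. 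To recover \eqref{eq:prop:alpha:thesis} I would use linearity of ${\cal L}$, namely $P_k^\T{\cal L}(\mat X_k+P_k\vec\alpha P_k^\T)P_k=P_k^\T{\cal L}(\mat X_k)P_k+P_k^\T{\cal L}(P_k\vec\alpha P_k^\T)P_k$, together with $P_k^\T\mat R_kP_k=P_k^\T\mat CP_k-P_k^\T{\cal L}(\mat X_k)P_k$, so that the two equations are seen to be rearrangements of one another.

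Finally, for existence, uniqueness, and the fact that the critical point is the global minimizer, I would establish that $\widetilde{\cal L}$ is positive definite. This follows from positive definiteness of ${\cal L}$: for $\vec\alpha\ne 0$ one has $\langle\vec\alpha,\widetilde{\cal L}(\vec\alpha)\rangle=\langle P_k\vec\alpha P_k^\T,{\cal L}(P_k\vec\alpha P_k^\T)\rangle>0$, provided $P_k\vec\alpha P_k^\T\ne 0$. The point to pin down, and the place where a hypothesis is genuinely needed, is the injectivity of the map $\vec\alpha\mapsto P_k\vec\alpha P_k^\T$, i.e. that $P_k$ has full column rank; under this assumption $P_k\vec\alpha P_k^\T=0$ forces $\vec\alpha=0$, so $\widetilde{\cal L}$ is symmetric positive definite, $\phi$ is strictly convex and coercive, and it therefore admits a unique stationary point equal to its global minimizer. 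This identifies $\vec\alpha_k$ as the unique solution of the reduced equation. I expect the only delicate steps to be the bookkeeping in the expansion and the explicit verification of the full-rank requirement on $P_k$ that guarantees uniqueness.
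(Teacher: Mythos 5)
Your proof is correct and follows essentially the same route as the paper's: both derive the first-order optimality (Galerkin) condition for the quadratic functional $\phi$ — you via the trace inner product and the reduction identity $\langle P_k\vec{\alpha}P_k^{\T},\mat{M}\rangle=\langle\vec{\alpha},P_k^{\T}\mat{M}P_k\rangle$, the paper via matrix-calculus derivative formulas — and then conclude by showing the reduced operator (equivalently, the Hessian $\mat{H}_k=\sum_{i}P_k^{\T}\bB_iP_k\otimes P_k^{\T}\bA_iP_k$) is positive definite using the positive definiteness of ${\cal L}$. One point in your favor: you explicitly flag that uniqueness requires $P_k$ to have full column rank so that $\vec{\alpha}\mapsto P_k\vec{\alpha}P_k^{\T}$ is injective; the paper's proof implicitly assumes this when asserting $\vec{y}^{\T}\mat{H}_k\vec{y}>0$ for all nonzero $\vec{y}$, the hypothesis being guaranteed only elsewhere by the algorithm keeping the columns of $P_k$ orthonormal.
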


\begin{proof}
We start by explicitly writing the function $\phi$, that is
$$\phi(\vec{\alpha}) =
\frac{1}{2} \langle \mat{X}_{k} + {P}_{k}\vec{\alpha}{P}_{k}^{\T},
{\cal L}(\mat{X}_{k} + {P}_{k}\vec{\alpha}{P}_{k}^{\T})\rangle -
\langle \mat{X}_{k} + {P}_{k}\vec{\alpha}{P}_{k}^{\T},\bC\rangle.
$$
To find the stationary points of $\phi$, we compute the
partial derivatives of $\phi$ with respect to $\vec{\alpha}$; this can
be done in matrix compact form; see, e.g.,
\cite{Petersen2008}.
We carry out this computation term by term, 
$$
\frac{\partial\text{tr}(\mat{X}_k^{\T}{\cal L}(\bX_k +
{P}_{k}\vec{\alpha} P_k^{\T}))}{\partial \vec{\alpha}}
=
\frac{\partial\text{tr}(\mat{X}_{{k}}^{\T}
{\cal L}({P}_{k}\vec{\alpha}{P}_{k}^{\T}))}{\partial \vec{\alpha}}
 = {P}_{k}^{\T}{\cal L}(\bX_k) {P}_{k},
$$
and
{\small
\begin{eqnarray*}
\frac{\partial \,\text{tr} (\bigl({P}_{k}\vec{\alpha}{P}_{k}^{\T})^\T
\mathcal{L}\bigl(\mat{X}_{k} + {P}_{k}\vec{\alpha}{P}_{k}^{\T}\bigr)\bigr)}{\partial \vec{\alpha}}&=&
\frac{\partial\, \text{tr}\,\bigl(\bigl({P}_{k}\vec{\alpha}{P}_{k}^{\T}\bigr)^{\T}\mathcal{L}\bigl(\mat{X}_{k}\bigr)\bigr)}{\partial \vec{\alpha}} +\frac{\partial\text{tr}\,\bigl(\bigl({P}_{k}\vec{\alpha}{P}_{k}^{\T}\bigr)^{\T}
	{\cal L}({P}_{k}\vec{\alpha}{P}_{k}^{\T})\bigr)}{\partial \vec{\alpha}} \\
&=&
	{P}_{k}^{\T}\mathcal{L}\bigl(\mat{X}_{k}\bigr){P}_{k} +
 2{P_k}^{\T}{\cal L}(P_k \vec{\alpha} P_k^\T)P_k.
\end{eqnarray*}
}
%
%
Moreover, it holds that
$\frac{\partial \text{tr}\,\bigl(\bigl({P}_k\vec{\alpha}{P}_k^{\T}\bigr)^{\T}\mat{C} \bigr)}{\partial \vec{\alpha}} = {P}_k^{\T}\mat{C}{P}_k$, see, e.g., \cite[Equations (101), (102), (108), (113)]{Petersen2008}.
The final expression of the Jacobian of $\phi$ with respect to $\vec{\alpha}$ is thus

$$\frac{\partial \phi(\vec{\alpha})}{\partial \vec{\alpha}}  =
P_k^\T {\cal L}(P_k \vec{\alpha} P_k^\T) P_k - {P}_{k}^{\T}\mat{R}_{k}{P}_{k}.
$$
Consequently, the solution $\vec{\alpha}_k$ of \eqref{eq:prop:alpha:thesis} is a
stationary point of $\phi$.
To ensure that $\vec{\alpha}_k$ is a minimizer, we show that the
Hessian of $\phi$ is positive definite.
To ease the reading, the Jacobian of $\phi$ is vectorized, resulting in
$$
\text{vec}\left(\frac{\partial \phi(\vec{\alpha})}{\partial \vec{\alpha}}\right)=
		\sum_{i=1}^{\ell}\bigl(P_k^\T\mat{B}_iP_k\kron P_k^\T\mat{A}_iP_k\bigr)\text{vec}\bigl(\vec{\alpha}\bigr) - \text{vec}\bigl({P}_{k}^{\T}\mat{R}_{k}{P}_{k}\bigr).
$$

The Hessian $\mat{H}_k\in\R^{s_k^2\times s_k^2}$ is given by
$\mat{H}_k= \sum_{i=1}^{\ell}\bigl(P_k^{\T}\mat{B}_iP_k\kron P_k^{\T}\mat{A}_iP_k\bigr)$.
Then, using the hypothesis on the operator $\cal L$,
for any nonzero $\vec{y}\in\R^{s_k^2}$, it holds that
$\vec{y}^\T\mat{H}_k\vec{y}>0$, so that $\mat{H}_k$ is positive definite,
and $\vec{\alpha}_k$ is a minimizer of $\phi$.
\end{proof}

\vskip  0.05in
\begin{remark}\label{RemarkP}
{\rm 
For $P_k$ full rank, the  
quantity $P_k \vec{\alpha}_k P_k^\T$ is invariant with respect to the
basis of ${\rm range}(P_k)$ used to compute $\vec{\alpha}_k$.  $\hfill\square$
}
\end{remark}
\vskip  0.05in

The minimization problem in \eqref{eq:min:alpha} can also be recast in terms of an orthogonality condition. 
Indeed, solving~\eqref{eq:min:alpha} is equivalent to imposing 
the following \textit{subspace orthogonality condition}
\begin{equation}
	\label{eq:Rk1orthPk}
	\text{vec}(\mat{R}_{k+1})\perp\text{range}({P}_k\kron{P}_k).
\end{equation}
{More precisely,~\eqref{eq:Rk1orthPk} is equivalent to 
$P_k^\T \mat{R}_{k+1} P_k =0$ with
$\mat{R}_{k+1}=\mat{C} - \mathcal{L}(\mat{X}_k+{P}_k\vec{\alpha}{P}_k^{\T})$.}
Hence, the computation of $\vec{\alpha}_k$ follows from a {\it local}
matrix Galerkin projection of the original problem onto a space of
dimension $s_k^2$ given by the current direction matrix factor $P_k$. We will return on
this aspect in section \ref{sec:discussion}.

\vskip 0.05in
\begin{remark}\label{rem:gal}
{\rm
Thanks to the linearity of the matrix operator $\cal L$, products of
the form $P_k^\T {\cal L}(\bX_k  + \bY_k)P_k$ for some matrix $\bY_k$, become
$$
P_k^\T {\cal L}(\bX_k  + \bY_k)P_k =
P_k^\T {\cal L}(\bX_k)P_k  + P_k^\T{\cal L}(\bY_k)P_k.
$$
 In addition,
using the low rank factor form of the argument matrix, the 
left and right products {act} on  the coefficient matrices
as in reduction processes; 
see, e.g., \cite{Antoulas.05,Simoncini2016}.
For instance, for $\bY_k=P_k {\bm \omega}_k P_k^\T$ and substituting the
general operator $\cal  L$  in (\ref{eqn:main}),  we obtain
$$
{P}_k^{\T}{\cal L}({P}_k\vec{\omega}_k{P}_k^{\T}){P}_k=
\widetilde\bA_1 {\bm \omega}_k \widetilde \bB_1 + \ldots + 
\widetilde\bA_\ell {\bm \omega}_k \widetilde \bB_\ell,
$$
with $\widetilde \bA_i= {P}_k^{\T} \mat{A}_i{P}_k$,
and $\widetilde \bB_i= {P}_k^{\T} \mat{B}_i{P}_k$,
for $i=1, \ldots, \ell$.
$\hfill\square$
}
\end{remark}
\vskip 0.05in

We define the recurrence for
the directions $\mat{P}_k$ as
$$	\mat{P}_{k+1} = \mat{R}_{k+1} + {P}_k\vec{\beta}_k{P}_k^{\T}.
$$
The matrix $\vec{\beta}_{k}\in\R^{s_k\times s_k}$
is obtained by imposing that the new directions $\bP_{k+1}$ are $\mathcal{L}$-orthogonal with respect to the previous ones. In particular, we write
$\text{vec}(\mat{P}_{k+1}) \perp_{\mathcal{L}} \text{range} ({P}_k\kron{P}_k)$,
that is
\begin{equation} \label{eq:beta:subsp-orth}
	({P}_k\kron{P}_k)^{\T}\text{vec}(\mathcal{L}(\mat{P}_{k+1})) = 0.
\end{equation}
Inserting the expression for $\mat{P}_{k+1}$, (\ref{eq:beta:subsp-orth}) becomes
${P}^{\T}_k \mathcal{L}(\mat{R}_{k+1} + {P}_k\vec{\beta}_k{P}_k^{\T}) {P}_k = 0$,
that is,
$$
{P}_k^{\T}\mathcal{L}(\mat{R}_{k+1}){P}_{k}  +
{P}_k^{\T}\mathcal{L}(P_k\vec{\beta}_k P_k^\T){P}_{k} = 0.
$$
This is a linear matrix equation in the unknown $\vec{\beta}_k$, with
the same coefficient matrices of the linear matrix equation used to
compute $\vec{\alpha}_k$.
Once again, and using the considerations in Remark~\ref{rem:gal},
$\vec{\beta}_k$ is obtained
by solving a linear matrix equation of the same type as the original one, but
with very small dimensions, by projecting the problem orthogonally onto $\text{range}(P_k\otimes P_k)$,
in a matrix sense.

Concerning the quality of the computed direction iterates,
we next show that the descent direction property (\ref{eqn:descent})
 is maintained.

\begin{proposition}\label{Prop:Discent_direction}
Let $\mat{P}_{k+1}= {P}_{k+1} \vec{\gamma}_{k+1} P^{\T}_{k+1}$ for
some matrix $\vec{\gamma}_{k+1}$.
Then	${\bP}_{k+1}$ is a descent direction.
\end{proposition}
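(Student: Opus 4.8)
The plan is to reduce the descent property at step $k+1$ to a positivity statement about the residual, and then to exploit the Galerkin orthogonality \eqref{eq:Rk1orthPk} that is enforced when computing $\vec{\alpha}_k$. First I would compute the gradient of $\Phi$. Differentiating $\Phi(\mat{X})=\frac12\langle\mat{X},\mathcal{L}(\mat{X})\rangle-\langle\mat{X},\mat{C}\rangle$ in the Frobenius inner product and using the symmetry of $\mathcal{L}$, one gets $\nabla\Phi(\mat{X})=\mathcal{L}(\mat{X})-\mat{C}=-\mat{R}$, where $\mat{R}=\mat{C}-\mathcal{L}(\mat{X})$. Hence the descent inequality $\langle\nabla\Phi(\mat{X}_{k+1}),\mat{P}_{k+1}\rangle<0$ required by \eqref{eqn:descent} is equivalent to $\langle\mat{R}_{k+1},\mat{P}_{k+1}\rangle>0$, so it suffices to show that this inner product is strictly positive.

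Next I would substitute the direction recurrence $\mat{P}_{k+1}=\mat{R}_{k+1}+P_k\vec{\beta}_kP_k^{\T}$ and split the inner product into the diagonal contribution $\langle\mat{R}_{k+1},\mat{R}_{k+1}\rangle=\|\mat{R}_{k+1}\|_F^2$ and the cross term $\langle\mat{R}_{k+1},P_k\vec{\beta}_kP_k^{\T}\rangle$. Rewriting the cross term as $\text{trace}(P_k^{\T}\mat{R}_{k+1}P_k\,\vec{\beta}_k)$, using the symmetry of $\mat{R}_{k+1}$ (guaranteed in the Lyapunov case) and cyclicity of the trace, I can invoke the subspace orthogonality $P_k^{\T}\mat{R}_{k+1}P_k=0$ from \eqref{eq:Rk1orthPk} to conclude that the cross term vanishes. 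This yields the identity $\langle\mat{R}_{k+1},\mat{P}_{k+1}\rangle=\|\mat{R}_{k+1}\|_F^2$, in perfect analogy with the classical vector \cg\ derivation.

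Finally, the right-hand side is $\|\mat{R}_{k+1}\|_F^2>0$ as long as $\mat{R}_{k+1}\neq0$; since $\mat{R}_{k+1}=0$ would mean the exact solution has been reached and the iteration stops, the strict inequality holds at every genuine step, establishing that $\mat{P}_{k+1}=P_{k+1}\vec{\gamma}_{k+1}P_{k+1}^{\T}$ is a descent direction. The hard part is precisely this strictness, together with recognizing that the indefinite core $\vec{\gamma}_{k+1}$ plays no role in the sign: the whole argument hinges on the cancellation of the cross term, so the point to get right is that the orthogonality $P_k^{\T}\mat{R}_{k+1}P_k=0$ is exactly the condition imposed when determining $\vec{\alpha}_k$ through the local Galerkin projection.
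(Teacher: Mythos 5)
Your proposal is correct and follows essentially the same route as the paper's proof: compute $\nabla\Phi(\mat{X}_{k+1})=-\mat{R}_{k+1}$, expand $\langle-\mat{R}_{k+1},\mat{R}_{k+1}+P_k\vec{\beta}_kP_k^{\T}\rangle$, and kill the cross term via the Galerkin orthogonality $P_k^{\T}\mat{R}_{k+1}P_k=0$ from \eqref{eq:Rk1orthPk}, leaving $-\|\mat{R}_{k+1}\|_F^2<0$. The only additions are your explicit trace-cyclicity justification of the cross-term cancellation and the remark on strictness when $\mat{R}_{k+1}\neq 0$, both of which the paper leaves implicit.
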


\begin{proof}
	To prove that $\bP_{k+1}$ is a descent direction matrix,
we must show that
\begin{equation} \label{proof:Pk1:eq:innprod}
   \langle \nabla\Phi(\mat{X}_{k+1}), \mat{P}_{k+1}\rangle < 0 ,
\end{equation}
with $\Phi$ defined in \eqref{eq:Phi:X}.
Following~\cite[Equations (101), (102), (108), (113)]{Petersen2008}, we compute the terms of the Jacobian of $\Phi$ with respect to $\mat{X}_{k+1}$, yielding
$$\frac{\partial \text{tr}(\mat{X}_{k+1}^\T\mat{C})}{\partial\mat{X}_{k+1}} =
{\mat{C}}, \qquad
\frac{\partial \text{tr}
\bigl(\mat{X}_{k+1}^\T\mathcal{L}(\mat{X}_{k+1})\bigr)}{\partial\mat{X}_{k+1}} =
2\mathcal{L}(\mat{X}_{k+1});
$$
Here we used the fact that $\bA_i, \bB_i$ are symmetric.
Hence,
$\nabla\Phi(\mat{X}_{k+1}) = \mathcal{L}(\mat{X}_{k+1})
- \mat{C} = -\mat{R}_{k+1}$.
The inner product of \eqref{proof:Pk1:eq:innprod} can be written as
\begin{equation*}
\begin{split}
\langle \nabla\Phi(\mat{X}_{k+1}), \mat{P}_{k+1}\rangle &
= \langle -\mat{R}_{k+1},  \mat{R}_{k+1} +
{P}_{k}\vec{\beta}_k{P}^{\T}_{k}\rangle\\
& = - \|\mat{R}_{k+1}\|_F^2 -\langle \mat{R}_{k+1},
{P}_{k}\vec{\beta}_k{P}^{\T}_{k}\rangle = - \|\mat{R}_{k+1}\|_F^2 < 0 ,
\end{split}
\end{equation*}
where, by using \eqref{eq:Rk1orthPk}, we have that
$\langle \mat{R}_{k+1}, {P}_{k}\vec{\beta}_k{P}^{\T}_{k}\rangle =  0$.
\end{proof}

The proof above relies on the property
$\langle \mat{R}_{k+1}, \mat{P}_{k}\rangle = 0$.
In our setting, this annihilation is ensured
 in a stronger sense than in the matrix-oriented \cg\ algorithm.
More precisely, 
not only $\text{vec}(\mat{P}_{k})^\T \text{vec}(\mat{R}_{k+1})=0$ holds, which would be enough to show Proposition~\ref{Prop:Discent_direction},
but the stronger constraint
$P_k^{\T}\mat{R}_{k+1}P_k=0$ holds.
This \emph{block} orthogonality is reminiscent of block methods for
multiple right-hand side systems~\cite{blockCG}, though in practice there
are no further connections.  


\begin{algorithm}[t]
{\footnotesize
\begin{algorithmic}[1]
\smallskip
\Statex \textbf{Input:} Operator $\mathcal L:\mathbb{R}^{n\times n}\rightarrow \mathbb{R}^{n\times n}$, right-hand side $\bC$, initial guess $\bX_0$, maximum number of iterations $\texttt{maxit}$, tolerance $\texttt{tol}$.
\Statex \textbf{Output:} Approximate solution $\bX_k$  such that $\|\mathcal L(\bX_k)-\bC\|\leq \|\bC\| \cdot \texttt{tol}$
\smallskip

\State Set $\bR_0=\bC-\mathcal{L}(\bX_0)$, $\bP_0=\bR_0=P_0P_0^{\T}$ 
\For{$k=0,\ldots,\mathtt{maxit}$}
\State Compute $\vec{\alpha}_k$ by solving $P_k^\T {\cal L}(P_k \vec{\alpha}_k P_k^\T)P_k = P_k^\T \bR_k P_k$\label{line:compute_alpha}
\State Set $\bX_{k+1}=\bX_k+P_k\vec{\alpha}_k P^{\T}_k$ in a factorized fashion $X_{k+1}\bm{\tau}_{k+1}X_{k+1}^{\T}=\bX_{k+1}$
 {\flushright{\hspace{8cm} optional: low-rank truncation of $\bX_{k+1}$}}
\State Set $\bR_{k+1}=\bC-\mathcal{L}(X_{k+1}\bm{\tau}_{k+1}X_{k+1}^{\T})$ in a factorized fashion $R_{k+1}\bm{\rho}_{k+1}R_{k+1}^{\T}=\bR_{k+1}$
 {\flushright{\hspace{8cm} optional: low-rank truncation of $\bR_{k+1}$}}
\If{$\|\bR_{k+1}\|\leq \|\bC\| \cdot \texttt{tol}$}
\State Return $\bX_{k+1}$
\EndIf
\State Compute $\vec{\beta}_k$ by solving $P_k^\T {\cal L}(P_k \vec{\beta}_k P_k^\T)P_k = -P_k^\T {\cal L}(\bR_{k+1}) P_k$\label{line:compute_beta}
\State Set $\bP_{k+1}=\bR_{k+1}+P_k\vec{\beta}_k P_k^{\T}$ in a factorized fashion $P_{k+1}\vec{\gamma}_{k+1}P_{k+1}^{\T}=\bP_{k+1}$
 {\flushright{\hspace{8cm} optional: low-rank truncation of $\bP_{k+1}$}}\label{alg:linePk1}
\EndFor
\State Return $\bX_{k+1}$
\end{algorithmic}    \caption{\sscg\ - vanilla version for multiterm Lyapunov equations.  \label{alg:subspaceCG} }
}
\end{algorithm}

%
%
%
%
%
%
%
%
%
%
%
%
%
%

\subsection{A first version  of the algorithm for the 
multiterm Lyapunov equation}\label{A first version  of the algorithm for the
multiterm Lyapunov equation}
Summarizing the previous derivation, 
the iteration of the \sscg\ scheme is
given in Algorithm~\ref{alg:subspaceCG}.
This algorithm includes extra commands with respect to our initial
presentation, which require more detailed explanation.
Following standard procedures,
the next iterates $\bX_{k+1}, \bP_{k+1}$, and $\bR_{k+1}$ are not explicitly computed, as this would lead to
storing large dense matrices. Each of these matrices is kept in factored form, whose
rank is truncated if necessary. The updating step is linked to the subsequent factorization step as follows.
Consider the approximate solution update, starting from
$\bX_{k}= X_{k} \vec{\tau}_{k} X_{k}^\T$. We write
$$
\bX_{k+1}=\bX_k + P_k \vec{\alpha}_k P_k^\T
= [X_k, P_k ] {\rm blkdiag}( \vec{\tau}_k, \vec{\alpha}_k)
 [X_k, P_k ]^\T =
 X_{k+1} \vec{\tau}_{k+1} X_{k+1}^\T,
$$
where $X_{k+1}$ is obtained as the reduced orthonormal factor of the QR decomposition of
$[X_k, P_k ]$, that is $[X_k, P_k ]=Q\bm{r}$, and $\vec{\tau}_{k+1} =
\bm{r} {\rm blkdiag}( \vec{\tau}_k, \vec{\alpha}_k) \bm{r}^\T$. A more precise implementation ensures
that $\vec{\tau}_{k+1}$ has full rank via an eigenvalue decomposition, that may lower the
rank of the  factor $X_{k+1}$. 
From a memory point of view, none of the full matrices in {bold
is stored, as factors are immediately created and saved.}
%
More details on this {rank reduction} 
will be given in section~\ref{sec:truncation}.

We stress that the updated terms $X_{k+1},
P_{k+1}$, and $R_{k+1}$ in Algorithm~\ref{alg:subspaceCG}
 each have orthonormal columns, thus simplifying some of the computations.
We also observe that the factor
$\vec{\gamma}_{k+1}$ in $P_{k+1}\vec{\gamma}_{k+1}P_{k+1}^{\T}$ does not play a role in later computations, as
only the subspace basis $P_{k+1}\otimes P_{k+1}$ is employed; 
{see Remark~\ref{RemarkP}}.
We postpone the complete implementation of the method
to section~\ref{sec:final_algo}, 
{after 
the description} of several advanced implementation strategies.

\section{Discussion on the developed procedure}\label{sec:discussion}

It is natural to compare the new \sscg\ with the standard matrix-oriented \cg.
The subspaces {acting in the \sscg\ method
are significantly larger than in matrix-oriented \cg, as explained next.}

\vskip 0.05in
\begin{remark}
{\rm 
In~\eqref{eq:Rk1orthPk}, orthogonality is imposed with respect to a subspace of 
$\R^{n^2}$ of dimension $s_k^2$.
On the other hand, in the matrix-oriented \cg\
condition~\eqref{eq:Rk1orthPk} is replaced by
$ \text{vec}(\bP_k)^{\T}
\text{vec}\bigl(\mat{R}_{k} - \alpha_k \mathcal{L}({P}_k {P}_k^{\T})\bigr)=0$,
with $\alpha_k\in\R$,
so that the orthogonality is imposed with respect to a subspace of $\R^{n^2}$ of dimension $1$.
Analogously, in~\eqref{eq:beta:subsp-orth}, orthogonality is imposed with respect
to a subspace of $\R^{n^2}$ of size $s_k^2$, whereas
in the matrix-oriented \cg, the orthogonality condition
is instead given by
$\text{vec}(\mat{P}_k)^{\T}\text{vec}\bigl(\mathcal{L}(\mat{P}_{k+1})\bigr)=0$,
{that is,} with respect to
a subspace of $\R^{n^2}$ of dimension~$1$.
$\hfill\square$ }
\end{remark}
\vskip 0.05in

The orthogonality conditions imposed in  deriving
the coefficient matrices $\vec{\alpha}_k$, $\vec{\beta}_k$ allow us
to extend orthogonality properties to other iterates, 
and to derive optimality results later in the section.
 To this end, we introduce
some notation for operations with the generalized Lyapunov operator.

For $R\in\R^{n\times s}$ we will denote with $\bA_\star \bullet  R$ the matrix
$$
\bA_\star \bullet R =[\bA_1R, \ldots, \bA_\ell R],
$$
and analogously for $\bB_\star\bullet  R$. Moreover, for $k\ge 0$ we  define
$$
\bA_\star^{k+1} \bullet R=
\bA_\star \bullet (\bA_\star^k \bullet R).
$$
We are going to characterize the spaces generated by the
factors $P_k, R_k$ of $\bP_k, \bR_k$, respectively, with the next proposition.
To this end, with the new notation we define the approximation space
$$
\Kr_k(\bA_\star,R_0)= {\rm range}([R_0, \bA_\star \bullet R_0, \ldots, \bA_\star^k \bullet R_0]);
$$
{Note that the spaces are nested, that is
$\Kr_k(\bA_\star,R_0)\subseteq \Kr_{k+1}(\bA_\star,R_0)$.
The notation above} is reminiscent of a block Krylov subspace. However, the space is
in general very different. 
{Indeed,}
it involves all matrices associated
with the operation $\bullet$, that is $\bA_1, \ldots, \bA_\ell$. 
{Although} the space dimension grows very quickly,
it can be significantly smaller than the sum of the number of
terms; {for instance, if} one of the $\bA_i$'s is the identity matrix,
then the product $\bA_\star \bullet  R_0$ will surely contribute at most
$(\ell-1) \cdot s$ vectors to the space 
${\rm range}([R_0, \bA_\star \bullet R_0])$, for $R_0\in\R^{n\times s}$,
due to the redundancy of $R_0$.
We also observe that for the Lyapunov operator,
$\Kr_k(\bA_\star,R_0)= \Kr_k(\bB_\star,R_0)$.
Note that the fact that the left and right spaces are the
same justifies our use of iterates in the form 
$P_k \vec{\omega} P_k^\T$ for some $\vec{\omega}$.

\begin{proposition}
\!\!\!\! Assume $\bX_0=0$ so that $R_0=C$. \!Then
${\rm range}(R_k),{\rm range}(P_k) \subseteq \Kr_k(\bA_\star,R_0)$.
\end{proposition}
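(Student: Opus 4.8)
The plan is to argue by induction on $k$, establishing simultaneously that $\text{range}(R_k)\subseteq\Kr_k(\bA_\star,R_0)$ and $\text{range}(P_k)\subseteq\Kr_k(\bA_\star,R_0)$. Throughout I identify the range of a factor with that of the corresponding bold matrix: since $\bR_k=R_k\bm{\rho}_kR_k^\T$ with $\bm{\rho}_k$ nonsingular we have $\text{range}(R_k)=\text{range}(\bR_k)$, and likewise $\text{range}(P_k)=\text{range}(\bP_k)$; a low-rank truncation only shrinks these ranges, so every inclusion below is preserved under truncation. The base case $k=0$ is immediate: by hypothesis $R_0=C$ and $\bP_0=\bR_0$, whence $\text{range}(R_0)=\text{range}(P_0)=\text{range}(C)=\Kr_0(\bA_\star,R_0)$.

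The two geometric facts that drive the induction are the following. First, for any factored symmetric matrix $P\vec{\omega}P^\T$ I use the symmetry of the $\bB_i$ to write $\mathcal{L}(P\vec{\omega}P^\T)=\sum_{i=1}^{\ell}(\bA_iP)\vec{\omega}(\bB_iP)^\T$, so that its column space is contained in $\text{range}([\bA_1P,\ldots,\bA_\ell P])=\text{range}(\bA_\star\bullet P)$; that is, applying $\mathcal{L}$ to a matrix built on $\text{range}(P)$ yields columns living in $\bA_\star\bullet\text{range}(P)$. Second, the $\bullet$ operation shifts the power index by one and hence maps $\Kr_k$ into $\Kr_{k+1}$: writing $\Kr_k=\text{range}(M_k)$ with $M_k=[R_0,\bA_\star\bullet R_0,\ldots,\bA_\star^k\bullet R_0]$, one checks that $\text{range}(\bA_\star\bullet M_k)=\text{range}([\bA_\star\bullet R_0,\ldots,\bA_\star^{k+1}\bullet R_0])\subseteq\Kr_{k+1}(\bA_\star,R_0)$, using the definition $\bA_\star^{j+1}\bullet R_0=\bA_\star\bullet(\bA_\star^j\bullet R_0)$. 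Before this I would record that $\text{range}(\bA_\star\bullet S)$ depends only on $\text{range}(S)$, so the operation is well defined on subspaces and monotone under inclusion; combined with the induction hypothesis $\text{range}(P_k)\subseteq\Kr_k$, this gives $\text{range}(\bA_\star\bullet P_k)\subseteq\bA_\star\bullet\Kr_k\subseteq\Kr_{k+1}$.

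With these facts in hand the inductive step is bookkeeping with the recurrences. From $\bR_{k+1}=\bR_k-\mathcal{L}(P_k\vec{\alpha}_kP_k^\T)$ I get $\text{range}(R_{k+1})\subseteq\text{range}(\bR_k)+\text{range}(\mathcal{L}(P_k\vec{\alpha}_kP_k^\T))\subseteq\Kr_k+\Kr_{k+1}=\Kr_{k+1}$, where the first summand is handled by the induction hypothesis and the nesting $\Kr_k\subseteq\Kr_{k+1}$, and the second by the two facts above. Then from $\bP_{k+1}=\bR_{k+1}+P_k\vec{\beta}_kP_k^\T$ I obtain $\text{range}(P_{k+1})\subseteq\text{range}(R_{k+1})+\text{range}(P_k)\subseteq\Kr_{k+1}$, once more invoking the induction hypothesis and nesting. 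This closes the induction.

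The recurrence manipulations are routine; the step that deserves care, and the one I would treat as the crux, is the clean formulation and use of the $\bullet$ operation at the subspace level: verifying that $\text{range}(\bA_\star\bullet S)$ is basis-independent and monotone, and that it advances the power index so that $\bA_\star\bullet\Kr_k\subseteq\Kr_{k+1}$. This is precisely what upgrades the inclusion $\text{range}(P_k)\subseteq\Kr_k$ into the needed $\text{range}(\mathcal{L}(P_k\vec{\alpha}_kP_k^\T))\subseteq\Kr_{k+1}$, and it is where the specific definition of the approximation space $\Kr_k(\bA_\star,R_0)$ is genuinely exploited.
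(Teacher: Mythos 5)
Your proof is correct and follows essentially the same route as the paper: an induction over the update recurrences, driven by the observation that applying $\mathcal{L}$ (equivalently, the $\bA_\star\bullet$ operation) to a matrix built on a subspace of $\Kr_k$ lands in $\Kr_{k+1}$. The only cosmetic differences are that you use the incremental residual recurrence $\bR_{k+1}=\bR_k-\mathcal{L}(P_k\vec{\alpha}_kP_k^{\T})$ where the paper writes $\bR_{k+1}$ directly in terms of $[R_0,\bA_\star\bullet X_{k+1}]$, and that you spell out the general inductive step which the paper only sketches via the first few iterations.
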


\begin{proof}
For brevity, we denote range$(Y)$  as $r(Y)$. 
We also recall that the updates have the form
\begin{eqnarray*}
\bX_{k+1}&=&X_k{\bm\tau}_kX_k^\T + P_k{\bm\alpha}_kP_k^\T = [X_k, P_k] {\bm\tau}_{k+1} [X_k, P_k]^\T, \\
\bR_{k+1}&=&[R_0,  \bA_\star \bullet X_{k+1}] {\bm\rho}_{k+1} [R_0,  \bB_\star \bullet X_{k+1}]^\T,
\end{eqnarray*}
for some ${\bm\rho}_{k+1}$, and
$\bP_{k+1}=R_{k+1}{\bm\rho}_{k+1}R_{k+1}^\T + P_k{\bm\beta}_kP_k^\T 
= [R_{k+1}, P_k] {\rm blkdiag}({\bm\rho}_{k+1},{\bm\beta}_k) [R_{k+1}, P_k]^\T$.
It suffices to collect and write down the block components for the first few iterations.
The result then follows by induction.
Indeed,
\begin{eqnarray*}
P_0&=&R_0,  \quad X_1=P_0, 
    \quad r(R_1)\subset r([R_0, \bA_\star \bullet R_0])= \Kr_1\\
r(P_1)&\subset &r([R_1, R_0 ])\subset \Kr_1, \quad X_2\subset 
r([X_1, P_1])\subset r( [P_0, P_1])\subset r([R_0, R_1])\\
 &\quad& \quad r(R_2)= r([R_0, \bA_\star \bullet X_2])\subset
r([R_0, \bA_\star \bullet R_0, \bA_\star \bullet P_1])\subset r([R_0, \bA_\star \bullet R_0, \bA_\star^2 \bullet R_0])
= \Kr_2 \\
r(P_2)&\subset &r([R_2, P_1 ]) \subset r([R_0,R_1,R_2])\subset \Kr_2, 
\end{eqnarray*}
and so on.
\end{proof}

We proceed with a result ensuring that
subsequent residual matrices are block orthogonal to each other.
In the following we say that a matrix with blocks has maximum possible rank
if rank reduction is only due to linear dependence in exact arithmetic.
For instance, $[v, \bA_1 v, v]$ and $[v, \bA_1 v]$ have the same
maximum possible rank two.
As a related concept, we shall talk about 
maximum possible dimension for the subspaces generated by matrices with
the same maximum possible rank.

\begin{proposition}
For any $k>0$, let $\bR_k = R_k \vec{\rho}_k R_k^{\rm\T}$.
Assume that all updates have maximum possible rank, so that
${\rm range}(\bP_k)$, ${\rm range}(\bR_k)$, and $\Kr_k(\bA_\star,R_0)$ have the same dimension.
Then $R_k^{\rm\T} \bR_{k+1} R_k=0$.
\end{proposition}

\begin{proof}
{Let the columns of $U_k$ form an orthonormal basis for $\Kr_k$.}
%
Using the stated hypotheses, we have that $P_k=U_k G_1$ and
$R_k=U_k G_2$ with $G_1$, $G_2$ having full row rank.
From (\ref{eq:Rk1orthPk}) we have that
$0= P_k^{\rm\T} \bR_{k+1} P_k= G_1^\T U_k^\T \bR_{k+1} U_k G_1$,
and since $G_1$ has full row rank, it holds that $ U_k^\T \bR_{k+1} U_k=0$.
Since 
$R_k^{\rm\T} \bR_{k+1} R_k= G_2^\T U_k^\T \bR_{k+1} U_k G_2$,
the result follows.
\end{proof}

From the proof above, and under the same hypothesis of 
equal maximum possible dimension of 
${\rm range}(\bP_k)$, ${\rm range}(\bR_k)$ and $\Kr_k(\bA_\star,R_0)$,
it also follows that
$R_j^\T \bR_{k+1} R_j = 0$, $j=1, \ldots, k$.

We can next state a finite termination result.

\begin{proposition}
Assume that ${\rm range}({R}_k)= {\rm range}({P}_k)$
and have maximum possible dimension.
If $\cal L$ is the {multiterm} Lyapunov operator and it holds that
$$
\text{range}({\cal L}(P_k\vec{\alpha}_kP_k^\T)) \subseteq {\rm range}(P_k), 
$$
then the space range$(P_k\otimes P_k)$ contains the exact solution.
\end{proposition}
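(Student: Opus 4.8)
The plan is to show that, under the stated hypotheses, the \emph{next} iterate $\bX_{k+1}$ is already the exact solution and that it lies in $\mathrm{range}(P_k\kron P_k)$; since $\mathcal L$ is positive definite the solution is unique, so this proves the claim. I would split the argument into two parts: first, that the residual $\bR_{k+1}$ vanishes; second, that $\bX_{k+1}=P_k\vec{\omega}P_k^{\T}$ for some $\vec{\omega}$, equivalently $\mathrm{vec}(\bX_{k+1})\in\mathrm{range}(P_k\kron P_k)$.

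For the first part, I would start from the residual recurrence $\bR_{k+1}=\bR_k-\mathcal L(P_k\vec{\alpha}_kP_k^{\T})$. The term $\bR_k=R_k\vec{\rho}_kR_k^{\T}$ has column range inside $\mathrm{range}(R_k)=\mathrm{range}(P_k)$, while the second term has column range inside $\mathrm{range}(P_k)$ precisely by the standing hypothesis $\mathrm{range}(\mathcal L(P_k\vec{\alpha}_kP_k^{\T}))\subseteq\mathrm{range}(P_k)$. Hence $\mathrm{range}(\bR_{k+1})\subseteq\mathrm{range}(P_k)$, and by symmetry of the Lyapunov operator the same containment holds for the row space. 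Using that $P_k$ has orthonormal columns, the orthogonal projector onto $\mathrm{range}(P_k)$ is $P_kP_k^{\T}$, so I can write $\bR_{k+1}=P_k(P_k^{\T}\bR_{k+1}P_k)P_k^{\T}$. The Galerkin condition \eqref{eq:Rk1orthPk}, namely $P_k^{\T}\bR_{k+1}P_k=0$, then forces $\bR_{k+1}=0$, and consequently $\mathcal L(\bX_{k+1})=\bC$.

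For the second part, I would invoke the structural nesting established in the preceding propositions: under the standing assumption $\bX_0=0$ and the maximum-dimension hypothesis, the same induction that yields $\mathrm{range}(R_k),\mathrm{range}(P_k)\subseteq\Kr_k(\bA_\star,R_0)$ also yields $\mathrm{range}(X_k)\subseteq\Kr_{k-1}(\bA_\star,R_0)$. Since the subspaces $\Kr_j(\bA_\star,R_0)$ are nested and, under the equal-dimension assumption, $\Kr_k(\bA_\star,R_0)=\mathrm{range}(P_k)$, this gives $\mathrm{range}(X_k)\subseteq\mathrm{range}(P_k)$. From the update $\bX_{k+1}=[X_k,P_k]\vec{\tau}_{k+1}[X_k,P_k]^{\T}$ it then follows that $\mathrm{range}(\bX_{k+1})\subseteq\mathrm{range}(P_k)$, and, by symmetry, $\bX_{k+1}=P_k\vec{\omega}P_k^{\T}$ with $\vec{\omega}=P_k^{\T}\bX_{k+1}P_k$. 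Combining the two parts, $P_k\vec{\omega}P_k^{\T}=\bX_{k+1}$ is the exact solution and lies in $\mathrm{range}(P_k\kron P_k)$.

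I expect the main obstacle to be the passage, in the first part, from the two range inclusions together with the \emph{block} Galerkin condition $P_k^{\T}\bR_{k+1}P_k=0$ to the full vanishing $\bR_{k+1}=0$: this is exactly where symmetry and the full column rank of $P_k$ (orthonormal columns) are indispensable, since without the row-space inclusion the sandwiched projection would not reconstruct $\bR_{k+1}$ faithfully. A secondary, more bookkeeping-type point is confirming $\mathrm{range}(X_k)\subseteq\mathrm{range}(P_k)$ from the nesting of the $\Kr_j(\bA_\star,R_0)$ spaces under the maximum-possible-dimension hypothesis.
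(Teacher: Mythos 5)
Your proof is correct and follows essentially the same route as the paper: range inclusion of $\bR_{k+1}$ in $\mathrm{range}(P_k)$ combined with the Galerkin condition $P_k^{\T}\bR_{k+1}P_k=0$ forces $\bR_{k+1}=0$. You are in fact slightly more complete than the paper, which stops at $\bR_{k+1}=0$ and leaves the containment $\bX_{k+1}\in\mathrm{range}(P_k\otimes P_k)$ to the surrounding discussion on the nestedness of the spaces $\Kr_j(\bA_\star,R_0)$, whereas you spell it out explicitly.
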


\begin{proof}
Under the stated hypothesis, 
${\cal L}(P_k\vec{\alpha}_kP_k^\T) = P_k \vec{\omega}_k P_k^\T$
for some matrix $\vec{\omega}_k$.
Hence,  $\bR_{k+1} = \bR_{k} -  P_k \vec{\omega}_k P_k^\T$ so
that ${\rm range}(R_{k+1})\subset {\rm range}(P_k)$.
From (\ref{eq:Rk1orthPk}) we have that
 $\bR_{k+1} \perp {range}(P_k)$, hence it must be  $\bR_{k+1}=0$.
\end{proof}

The formalization in terms of the space $\Kr_k$ allows us 
to characterize the new method with respect to less close but
still  known approaches.  Unless truncation takes place,
it holds that
range$(P_{k-1})\subseteq {\rm range}(P_{k})$, so that the iterate
$\mat{X}_{k+1}$ could be
written as $\mat{X}_{k+1}=P_{k} \vec{\tau}_{k} P_{k}^\T$, 
for some $\vec{\tau}_{k}$.
Moreover, the residual matrix $\bR_{k+1}$ is orthogonal, in the
matrix inner product, to the space  $\Kr_k\otimes \Kr_k$. These
two properties together show that under  maximum possible rank of
the iterates,
the new algorithm is mathematically equivalent to the Galerkin method 
for Lyapunov equations on the
subspace $\Kr_k(\bA_\star, R_0)$ \cite{Simoncini2016}.
For the operator 
${\cal L}(\bX)=\bA \bX + \bX \bA^\T +\bM \bX \bM^\T$, it is interesting  to
observe that $\Kr_k$ is the same as the space introduced in
 \cite[Section 4]{SimonciniHao2023}, although in there the space
was generated 
one vector at the time.
Moreover, the approach we are taking here  allows us
to update the iterates, rather than solving the projected system 
from scratch at each iteration.
The two approaches significantly deviate when truncation
takes place. 

By following the discussion in~\cite[Section 2.2]{Vietnam}, thanks 
to the orthogonality condition~\eqref{eq:Rk1orthPk} imposed to 
compute $\vec{\alpha}_k$, we can also state the following optimality result.

\begin{proposition}
Let $\bX$ be the exact solution 
to~\eqref{eqn:main} and $\|\bY\|^2_{\mathcal{L}}=\langle \bY,\mathcal{L}(\bY)\rangle$.
 Assume that $P_k\in\mathbb{R}^{n\times s_k}$ is computed by Algorithm~\ref{alg:subspaceCG} with no low-rank
truncation and that $\text{range}(P_k)$ has the maximum possible dimension. 
Then, $\bX_k=P_k\vec{\tau}_kP_k^\T$ is such that
$$
{\displaystyle \bX_k={\rm arg}\min_{\substack{\bZ=P_k\vec{\tau}P_k^\T
\\ \vec{\tau}\in\mathbb{R}^{s_k\times s_k}}}\|\bX-\bZ\|_{\mathcal{L}}.}
$$
\end{proposition}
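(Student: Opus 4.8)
The plan is to prove this optimality result by reducing it to the standard energy-norm minimization property that accompanies any Galerkin/orthogonality condition, exactly as in the vector conjugate gradient theory. The quantity $\|\bX-\bZ\|_{\mathcal{L}}^2 = \langle \bX-\bZ, \mathcal{L}(\bX-\bZ)\rangle$ is a genuine squared norm because $\mathcal{L}$ is positive definite, so the minimization over the affine-linear set $\{\bZ = P_k\vec{\tau}P_k^\T : \vec{\tau}\in\mathbb{R}^{s_k\times s_k}\}$ is a projection problem in the $\mathcal{L}$-inner product. First I would set $\bZ = P_k\vec{\tau}P_k^\T$ and expand the squared norm, using $\mathcal{L}(\bX)=\bC$ (the defining equation for the exact solution) to write $\mathcal{L}(\bX) = \bC = \bR_k + \mathcal{L}(\bX_k)$, thereby relating the error at a candidate $\bZ$ to the residual and the reduced operator.

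The key step is to verify the Galerkin-type stationarity condition for the minimizer over this subspace. Differentiating $\Phi$-style, or directly expanding $\|\bX-\bZ\|_{\mathcal{L}}^2$ in $\vec{\tau}$, the first-order optimality condition is that the $\mathcal{L}$-inner product of the error $\bX-\bZ$ against every admissible variation $P_k\vec{\delta}P_k^\T$ vanishes, i.e.
\begin{equation*}
\langle P_k\vec{\delta}P_k^\T, \mathcal{L}(\bX-\bZ)\rangle = 0 \quad\text{for all } \vec{\delta}\in\mathbb{R}^{s_k\times s_k}.
\end{equation*}
Since $\mathcal{L}(\bX-\bZ) = \bC - \mathcal{L}(\bZ)$, this is precisely the statement that $P_k^\T(\bC - \mathcal{L}(\bZ))P_k = 0$, i.e.\ $P_k^\T\bR P_k = 0$ where $\bR$ is the residual at $\bZ$. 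I would then invoke the hypothesis that $\text{range}(P_{k-1})\subseteq\text{range}(P_k)$ under no truncation (established earlier in this section), which guarantees that the candidate $\bX_k = P_k\vec{\tau}_kP_k^\T$ actually lives in the admissible set with the current factor $P_k$, and that its associated residual $\bR_{k+1}$ satisfies exactly the orthogonality condition~\eqref{eq:Rk1orthPk}, namely $P_k^\T\bR_{k+1}P_k = 0$.

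Putting these together, the iterate $\bX_k$ produced by Algorithm~\ref{alg:subspaceCG} satisfies the stationarity condition that uniquely characterizes the minimizer; because $\|\cdot\|_{\mathcal{L}}^2$ is a strictly convex quadratic in $\vec{\tau}$ (the Hessian being positive definite by the argument in Proposition~\ref{prop:alpha}), the stationary point is the unique global minimizer, which completes the proof. The main obstacle I anticipate is the bookkeeping that reconciles the index shift between the residual $\bR_{k+1}$ appearing in the orthogonality condition~\eqref{eq:Rk1orthPk} and the claim about $\bX_k$: one must carefully use the nesting $\text{range}(P_{k-1})\subseteq\text{range}(P_k)$ to express $\bX_k$ in terms of the single factor $P_k$ and to ensure the Galerkin condition holds with respect to the \emph{same} $P_k$ that parametrizes the admissible set, rather than the factor from the previous step. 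A secondary subtlety is confirming that the maximum-possible-dimension hypothesis is exactly what is needed to rule out degeneracy of $\vec{\tau}$ and thereby guarantee both well-posedness of the parametrization and uniqueness of the minimizer.
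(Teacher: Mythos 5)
Your argument is correct, and it is essentially the argument the paper intends: the paper itself gives no details, deferring entirely to the proof of Proposition 1 in the cited reference, which is the standard Galerkin-orthogonality/energy-norm argument you reconstruct (minimizing $\|\bX-\bZ\|_{\mathcal{L}}$ over $\bZ=P_k\vec{\tau}P_k^\T$ is a strictly convex quadratic whose stationarity condition is $P_k^\T(\bC-\mathcal{L}(\bZ))P_k=0$, which is exactly~\eqref{eq:Rk1orthPk} once the nesting of the ranges lets you write the current iterate in terms of the single factor $P_k$). You also correctly flag the only delicate point, namely the index bookkeeping between the iterate written as $P_k\vec{\tau}_kP_k^\T$ and the residual $\bR_{k+1}$ appearing in~\eqref{eq:Rk1orthPk}; this is an ambiguity already present in the paper's own statement, and your resolution via ${\rm range}(P_{k-1})\subseteq{\rm range}(P_k)$ together with the maximum-possible-dimension hypothesis is the right one.
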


\begin{proof}
 The proof follows the same lines as the proof of~\cite[Proposition 1]{Vietnam}.
\end{proof}

We end this section with a consideration on the numerical rank 
of the approximate solution iterate. Numerical experiments with matrix-oriented
\cg\ have shown that without truncation, the approximate solution rank tends to
significantly increase before decreasing towards its final value, corresponding
to the rank of the exact solution; see, e.g.,
 \cite{KressnerTobler2011}. Allowing for a richer linear combination of
the generated space columns, we expect that the approximate solution of
\sscg, with no truncation, will reach the final rank without an intermediate
growth. Numerical experiments seemed to confirm this fact, although a rigorous
analysis remains an open problem.

\section{The iteration for the multiterm Sylvester equation}\label{sec:sylv}
When the matrix operator ${\cal L}$ is nonsymmetric, that is
${\cal L}(\bX)\ne ({\cal L}(\bX))^\T$ for symmetric $\bX$, or $\bC$ is
indefinite or even nonsymmetric, the iteration
obtained with the new algorithm needs to be revised to address the general
Sylvester problem in (\ref{eqn:main}).
For general multiterm Sylvester equations, we still assume that all coefficient
matrices are symmetric, although the $\bA_i$'s and the $\bB_i$'s matrices are different.
This is in fact  the more common situation
for the problem (\ref{eqn:main}), as the coefficient matrices $\bA_i, \bB_i$,
$i=1, \ldots, \ell$ may even have different dimensions, leading to a 
rectangular solution matrix $\bX$.
Fortunately, the algorithmic differences are only technical, mostly affecting the notation.
Given two matrices $P_k^{l}, P_k^{r}$, the  iterates are computed by
means of the relation
$\bX_{k+1}=\bX_{k} + P_k^{l} \vec{\alpha}_k (P_k^{r})^\T$,
with $\bX_{k}\in\R^{n_A\times n_B}$,
and $\vec{\alpha}_k$ is computed by solving the reduced equation
$$
(P_k^{l})^\T {\cal L}(P_k^{l} \vec{\alpha} (P_k^{r})^\T) P_k^{r}
= (P_k^{l})^\T \mat{R}_k P_k^{r}.
$$
Analogously,
$\bP_{k+1}=\bR_{k+1} + P_k^{l} \vec{\beta}_k (P_k^{r})^\T$, so that
$P_{k+1}^{l} \vec{\gamma}_{k+1} (P_{k+1}^{r})^\T = \bP_{k+1}$,
where $\vec{\beta}_k$ solves
$(P_k^{l})^\T {\cal L}(\bR_{k+1}) P_k^{r}
+ (P_k^{l})^\T {\cal L}(P_k^{l} \vec{\beta} (P_k^{r})^\T) P_k^{r}=0$.
This construction of $\vec{\alpha}_k$ and $\vec{\beta}_k$ ensures
that the orthogonality conditions discussed in the previous sections
continue to hold, with respect to the left and right factors.

Like for the direction matrix $\bP_k$, also the iterates $\bX_k$ and $\bR_k$ 
will be nonsymmetric and possibly rectangular,
and they need to be factorized accordingly, 
namely $\bX_k=X_{k}^{l} \vec{\tau}_{k} (X_{k}^{r})^\T$ 
and $\bR_k=R_{k}^{l} \vec{\rho}_{k} (R_{k}^{r})^\T$.
All truncation strategies will have to
keep into account the nonsymmetry of the iterate, and in particular, we
will see that all computations need to be performed in a mirrored fashion
for the left and right spaces.
The overall algorithm for the resulting
multiterm Sylvester equation will be given in section~\ref{sec:final_algo},
Algorithm~\ref{alg:alphaCG}.

\section{Advanced implementation devices}\label{Enhancement strategies}
In this section we discuss some advanced
devices 
to make  Algorithm~\ref{alg:subspaceCG} competitive and robust
in terms of memory requirements,
running time, and final attainable accuracy for the given chosen truncation thresholds.
We start by analyzing in detail the low-rank iterate truncation, including
the residual matrix computation, 
then we discuss the computation of the (matrix) coefficients 
$\vec{\alpha}_k$ and $\vec{\beta}_k$.


\subsection{Low-rank truncations}\label{sec:truncation}
Solvers for large-scale matrix equations often require a low-rank truncation step to make the overall solution process affordable in terms of storage allocation. 
This is usually carried out by performing a thin QR decomposition 
and a subsequent truncated singular value decomposition (SVD) 
of small dimensional objects; see, e.g.,~\cite{KressnerTobler2011}.
For the sake of brevity, we will refer to this procedure as a QR-SVD (low-rank) truncation in the following.
\rev{In matrix-oriented constructions of \cg\ type methods, the truncation step is
a crucial part of the implementation, because it determines the actual success of the whole
procedure. A well designed truncation strategy, balancing the low-rank requirement and the
singular value accuracy, may allow the algorithm to deliver a sufficiently accurate
solution. In the past few years the effects of truncation have been analyzed -- both
experimentally and theoretically -- in several articles, see, e.g.,
 \cite{KressnerTobler2011},\cite{Kressner.Plesinger.Tobler.14},\cite{SimonciniHao2023},\cite{Stoll.Breiten.15}.
In our setting the space truncation is completely analogous to that encountered
in truncated \cg, so that a similar sensitivity to truncation is expected;
 this was confirmed by our extensive computational experience;
we refer to Example~\ref{ex:diffreac} for a sample.}

In our setting, if we consider
 $\bX_{k}= X_{k}^l \vec{\tau}_{k} (X_{k}^r)^\T$, this is updated as  
$$
\bX_{k+1}=\bX_k + P_k^l \vec{\alpha}_k (P_k^r)^\T
= [X_k^l, P_k^l ] {\rm blkdiag}( \vec{\tau}_k, \vec{\alpha}_k)
 [X_k^r, P_k^r ]^\T.
$$
Let $Q^l{\bm r}^l= [X^l_k, P_k^l]$, $Q^r{\bm r}^r= [X^r_k, P_k^r]$ be 
the thin QR decompositions of the two matrices, 
and compute the singular value decomposition 
${\bm  r}^l {\rm blkdiag}( \vec{\tau}_k, \vec{\alpha}_k) ({\bm r}^r)^{\T}=
U\Sigma V^{\T}$, with 
$\Sigma={\rm diag}(\sigma_1, \ldots, \sigma_s)$.
The low-rank truncation then takes place following two different criteria. 
The first one uses a threshold {\tt tolrank} and the other one 
 a maximum rank {\tt maxrank}.  In the first case, 
the number of columns $\widehat j_{k+1}$ of the low-rank 
terms $X_{k+1}^l$ and $X_{k+1}^r$ defining $\bX_{k+1}$ will be given by
$\widehat j_{k+1}={\rm arg}\max_j\{\sigma_j\, : \,(\sigma_j/\sigma_1)\leq\mathtt{tolrank}\}$, where the $\sigma_j$s are the
singular values just computed.
In the second case, for $\Sigma$ of size $s \times s$, we have
$\widehat j_{k+1}= \min \{ \mathtt{maxrank}, s\}$.
These two selections of $\widehat j_{k+1}$ are often performed in different moments of the iterative solver. 
The {\tt tolrank} criterion is preferable at an initial stage, when the iterates rank
is still moderate by construction. In later iterations, 
memory constraints generally force the application of the more 
aggressive truncation based on {\tt maxrank}. 
An automatic switch between the two truncation policies  is  obtained 
as follows
$$
\widehat j_{k+1}= \min \{ \mathtt{maxrank}, s,{\rm arg}\max_j\{\sigma_j\, : \,(\sigma_j/\sigma_1)\leq\mathtt{tolrank}\}\}.
$$
Once $\widehat j_{k+1}$ is selected, we define 
$X_{k+1}^l=Q^lU_{1:\widehat j_{k+1}}$, $X_{k+1}^r=Q^rV_{1:\widehat j_{k+1}}$, 
and $\bm{\tau}_{k+1}=\Sigma_{1:\widehat j_{k+1}}$, with the previous notation.
%
%
{In the rest of the paper, we will adopt the notation (see, e.g., \cite{KressnerTobler2011})
\begin{equation}\label{eq:QRSVD}
[X_{k+1}^l,\bm{\tau}_{k+1},X_{k+1}^r] = {\cal T}([X_{k}^l,P_{k}^l],\text{blkdiag}(\bm{\tau}_{k},\bm{\alpha}_{k}),[X_{k}^r,P_{k}^r],\mathtt{params}),
\end{equation}
for the computation of the QR-SVD truncated updating. In~\eqref{eq:QRSVD}, {\tt params} is a shorthand notation that indicates that all the necessary parameters are given in input. In particular, our QR-SVD requires the values
{\tt tolrank} and {\tt maxrank}.}
A QR-SVD truncation can be applied to compute $\bP_{k+1}=P_{k+1}^l\bm{\gamma}_{k+1}(P_{k+1}^r)^{\T}$ as well.

In principle, the term 
$\bR_{k+1}=R_{k+1}^l\vec{\rho}_{k+1}(R_{k+1}^r)^{\T}$ could be computed in the same way. However,
we would like to bring to the reader's attention an aspect that is often overlooked. More precisely, by 
following the same procedure as above, we would have
\begin{eqnarray}
	\bR_{k+1}&=&\bC-\mathcal{L}(X_{k+1}^l\bm{\tau_{k+1}}(X_{k+1}^r)^{\T}) \notag
\\
 &=&[C_1,A_1X_{k+1}^l,\ldots,A_\ell X_{k+1}^l]\begin{bmatrix}
                                 I&&&\\
                                 &-\bm{\tau}_{k+1}&&\\
                                 &&\ddots&\\
                                 &&&-\bm{\tau}_{k+1}\\
                                \end{bmatrix}
[C_2,B_1X_{k+1}^r,\ldots,B_\ell X_{k+1}^r]^{\T}.\label{eqn:res}
\end{eqnarray}
For a large number of terms $\ell$ in~\eqref{eqn:main}, explicitly allocating the matrices 
$[C_1,A_1X_{k+1}^l,\ldots,A_\ell X_{k+1}^l]$ and $[C_2,B_1X_{k+1}^r,\ldots,B_\ell X_{k+1}^r]$ may not be
affordable\footnote{Notice that in the multiterm Lyapunov case,
only the factor $[C,A_1X_{k+1},\ldots,A_\ell X_{k+1}]$ needs to be stored, by possibly 
rearranging the terms in the middle matrix containing the ${\bm \tau}_{i}$s;
see, e.g., \cite{SimonciniHao2023}.}.
This drawback is not a peculiarity of Algorithm~\ref{alg:subspaceCG},  as it often plagues solvers 
for~\eqref{eqn:main} as, e.g., the algorithms in~\cite{SimonciniHao2023},%
\cite{PalittaKuerschner2021},\cite{Biolietal2024}.

For $\ell$ larger than three or four, say,
we consider the use of two possible strategies to overcome this issue. 
The first one computes a number of thin QR factorizations, and
the second one relies on randomized numerical linear algebra tools.
In the following discussion, we employ an ad-hoc memory allocation 
threshold, namely {\tt maxrankR}, which
we set to be equal to $2\cdot${\tt maxrank} in our implementation.
Note that this value does not increase the actual requested 
memory allocations, as
the strategy employed for the iterates $X_{k+1}, P_{k+1}$ above requires 
storing two blocks of size {\tt maxrank} each; {see
Algorithm~\ref{alg:alphaCG}.}

\vskip 0.1in
{\it Dynamic truncated QR update}. In place of computing the thin 
QR of the whole matrices
$[C_1,\bA_1X_{k+1}^l\bm{\tau_{k+1}},\ldots,\bA_\ell X_{k+1}^l\bm{\tau_{k+1}}]$ and
$[C_2,-\bB_1X_{k+1}^r,\ldots,-\bB_\ell X_{k+1}^r]$ and then use their triangular
factors in the truncation, we sequentially combine $2\ell$ thin QR 
factorizations one after the other, detecting a possible linear dependency 
in the factors after each QR. More in detail,
we collect the subsequent products as follows:

\vskip 0.1in

\hskip 0.3in 
$Q^l {\bm r}^l=[C_1,\bA_1 X_{k+1}^l\bm{\tau_{k+1}}]$, \hskip 0.2in  $Q^r {\bm r}^r=[C_2, -\bB_1 X_{k+1}^r]$
\hskip 0.2in {\footnotesize{(QR factors of the two matrices)}}

\hskip 0.3in For $j=2, \ldots, \ell$

\hskip 0.5in $Q^l{\bm r}_1^l=[Q^l, \bA_j X_{k+1}^l\bm{\tau_{k+1}}]$, \hskip 0.2in
$Q^r {\bm r}_1^r=[Q^r, -\bB_j X_{k+1}^r]$ 
\hskip 0.2in {\footnotesize{(QR factors of the two matrices)}}

\hskip 0.5in ${\bm r}^l = {\bm r}_1^l \begin{bmatrix} {\bm r}^l & 0 \\ 0 & I \end{bmatrix}$
, \hskip 0.2in
${\bm r}^r = {\bm r}_1^r \begin{bmatrix} {\bm r}^r & 0 \\ 0 & I \end{bmatrix}.$

\vskip 0.1in

This procedure does not yet control the rank. To do so, 
the triangular matrices ${\bm r}_1^l$ and ${\bm r}_1^r$ are decomposed by means
of the SVD, so as to truncate the rank down to the maximum admittable
value {\tt maxrankR}. More precisely, if ${\bm r}_1^l=U\Sigma V^\T$ is the singular value
decomposition of ${\bm r}_1^l$, then the factors are truncated to the
most $i\le$  {\tt maxrankR} leading diagonal elements in $\Sigma$, so that
${\bm r}_1^l\approx U_{:,1:i}\Sigma_{1:i,1:i} V_{:,1:i}^\T$. Then,
the matrices $Q^l$ and ${\bm r}^l$ are updated accordingly\footnote{A rank revealing
QR decomposition could also be employed.}, that is
$$
Q^l = Q^l U_{:,1:i}, \qquad 
{\bm r}^l = \Sigma_{1:i,1:i} V_{:,1:i}^\T 
\begin{bmatrix} {\bm r}^l & 0 \\ 0 & I \end{bmatrix} .
$$
The same is done for $Q^r$ and ${\bm r}^r$.
%
At the end of the $j$-cycle, setting 
$R_{k+1}^l=Q^l$, $R_{k+1}^r=Q^r$, and $\vec{\rho}_{k+1}={\bm r}^l ({\bm r}^r)^\T$, 
we (re)define
$\bR_{k+1} := R_{k+1}^l \vec{\rho}_{k+1} (R_{k+1}^r)^\T$,
(not explicitly computed)
which is now an approximation to the true quantity in (\ref{eqn:res}).

\vskip 0.1in
{\it Randomized approximate QR update}.
The main goal is to compute tall matrices $Q$ and $W$ with orthonormal columns,
whose range is able to well represent the 
column- and row-space of $\bR_{k+1}$, respectively, i.e., $\text{range}(Q)\approx \text{range}(\bR_{k+1})$ and $\text{range}(W)\approx \text{range}(\bR_{k+1}^{\T})$. To this end, we apply the randomized range finder algorithm that can be found in, e.g.,~\cite[Algorithm 4.1]{Halko2010}. The first step in~\cite[Algorithm 4.1]{Halko2010} consists in multiplying the matrix at hand
($\bR_{k+1}$ and $\bR_{k+1}^{\T}$ in our case) by
a so-called \emph{sketching} matrix $G^l$ of conforming dimensions. The randomized nature 
of this sketching matrix is key for the success of the entire procedure. We will employ only Gaussian sketching matrices, though other options are available in the literature; see, e.g.,~\cite{Halko2010} for a discussion.

In our setting, given a target rank {\tt maxrankR}, we generate a Gaussian matrix $G^l\in\mathbb{R}^{n_B \times \mathtt {maxrankR}}$, and using (\ref{eqn:res})
we then compute
\begin{equation}\label{eqn:Romega}
	\bR_{k+1}G^l=
C_1(C_2^{\T}G^l)-\sum_{i=1}^\ell \bA_i(X_{k+1}^l\bm{\tau}_{k+1}((X_{k+1}^r)^{\T}(\bB_i G^l))).
\end{equation}
The algorithm proceeds with computing the $Q\in\mathbb{R}^{n_A\times \mathtt{maxrankR}}$ matrix of the thin
QR decomposition for the right-hand side matrix in (\ref{eqn:Romega}),
whose range is used as an approximation of the column-space of $\bR_{k+1}$.
The quality of this approximation strongly depends on the choice of the target 
rank $\mathtt{maxrankR}$ and on the decay rate of the singular values of
$\bR_{k+1}$; see, e.g.,~\cite[Theorem 9.1]{Halko2010}. 

The same procedure is adopted to compute 
$W\in\mathbb{R}^{n_B\times \mathtt{maxrankR}}$, with $\bR_{k+1}$
replaced by $\bR_{k+1}^{\T}$.

Once $Q$ and $W$ are computed, we use (\ref{eqn:res}) to perform
\begin{align*}
 Q^{\T}\bR_{k+1}W=
Q^{\T}C_1C_2^{\T}W-\sum_{i=1}^\ell (Q^{\T}\bA_iX_{k+1})
\bm{\tau}_{k+1}(X_{k+1}^{\T}\bB_iW)\in\mathbb{R}^{\mathtt{maxrankR}\times \mathtt{maxrankR}}.
\end{align*}
The procedure concludes by computing a truncated SVD of $Q^{\T}\bR_{k+1}W$, namely $Q^{\T}\bR_{k+1}W\approx \widehat U\widehat\Sigma\widehat V^{\T}$ providing the terms $R_{k+1}^l=Q\widehat U$, $\bm{\rho}_{k+1}=\widehat \Sigma$, and $R_{k+1}^r=W\widehat V$.

We would like to stress that the same sketching matrices $G^l$, $G^r$ 
can be used throughout the process, so that they can be generated  once for all
at the beginning of the iterative procedure. 

\rev{As already mentioned, in principle one could use non-Gaussian sketching matrices 
$G^l$ and $G^r$ and adopt, e.g., subsampled randomized trigonometric 
transformations (SRTT) for this task. 
However, we believe that employing Gaussian matrices is more appropriate  
in our context. Indeed, due to memory restrictions, we are able 
to allocate (at most) {\tt maxrankR} columns for the sketching matrices,
and Gaussian matrices provide better approximations 
than SRTTs for a fixed target rank, in general; 
see, e.g.,~\cite[Section 11.2]{Halko2010}. 
Achieving good rank-{\tt maxrankR} approximations to the residual 
matrix $\bR_k$ is crucial for the convergence of the overall \sscg\ method. 
Therefore, we always employ Gaussian matrices in spite of the slightly larger, 
yet negligible, cost in their application.
}

\vskip 0.1in

Numerical results reported in Table~\ref{ex_stoch:comparison} for Example~\ref{ex:stoch} 
for which $\ell=10$, show that the randomized update is superior to the dynamic
truncated procedure. Hence, in all other tests we either report results with
explicit computations of the products with 
$\bA_\star\bullet$, $\bB_\star\bullet$ or with the randomized strategy.
The corresponding procedure is summarized in
Algorithm~\ref{alg:Tres} in the Appendix, and it is named $\mathcal{T}_{res}$.

\subsection{Inaccurate coefficients}\label{Inaccurate coefficients} 
The computation of $\vec{\alpha}_k$ and  $\vec{\beta}_k$ requires the solution of an algebraic
equation with a linear operator having the same nature of the 
original $\cal L$, but with smaller dimension.
Up to a certain column dimension of $P_k$,
the matrices 
$\vec{\alpha}_k$, $\vec{\beta}_k$ can be computed by solving the related 
linear systems in Kronecker form by means of a direct solver.
Notice that 
the coefficient matrix of such linear systems has to be assembled 
only once to compute both $\vec{\alpha}_k$ and  $\vec{\beta}_k$. 
However, it may be the case,
either because of the not-so-small dimension, or because of the complexity of the operator $\cal L$,
the computation of $\vec{\alpha}_k$ and  $\vec{\beta}_k$ may have to be 
done via an iterative procedure such as the {truncated matrix-oriented \cg.}
If this is the case,
 these quantities are computed inexactly, that is, the exact matrices are
replaced by
approximate quantities. We denote them as
$\widetilde{\vec{\alpha}}_k= \vec{\alpha}_k + \vec{\epsilon}_k$, 
$\widetilde{\vec{\beta}}_k= \vec{\beta}_k + \vec{\eta}_k$.
To ease the connection with the derivations 
of section~\ref{Truncated matrix-oriented CG}  and section~\ref{sec:discussion},
in the following we use the symmetric notation for the factors.
The inexact solutions $\widetilde{\vec{\alpha}}_k$ and $\widetilde{\vec{\beta}}_k$
no longer grant the orthogonality properties associated with the exact 
quantities $\vec{\alpha}_k$ and  $\vec{\beta}_k$.
In particular, by defining $ \widetilde \bR_{k+1}  = \bR_k - {\cal L}(P_k \widetilde{\vec{\alpha}}_k P_k^\T)$,
the orthogonality property $P_k^\T \widetilde \bR_{k+1} P_k = 0$ is lost.
Nonetheless, loss of local orthogonality can be 
tracked at each iteration, and directly related to the accuracy
with which the small problems are solved. This is described in the following proposition.

\begin{proposition}
Let $\widetilde{\vec{\alpha}}_k$ be the approximate solution to
$P_k^\T {\cal L}(P_k \vec{\alpha}_k P_k^\T)P_k = P_k^\T \bR_k P_k$, and
let $\vec{\varrho}_k$ be the associated residual matrix.  Then
$$
P_k^\T \widetilde \bR_{k+1} P_k = \vec{\varrho}_k,
$$
where it also holds that $P_k^\T \widetilde \bR_{k+1} P_k = P_k^\T (\widetilde \bR_{k+1}-
\bR_{k+1}) P_k$.
\end{proposition}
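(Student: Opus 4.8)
The plan is to unwind the definitions and recognize the left-hand side as precisely the residual of the small projected equation evaluated at the inexact solution. Recall that $\vec{\alpha}_k$ is meant to solve the reduced equation $P_k^\T {\cal L}(P_k \vec{\alpha} P_k^\T) P_k = P_k^\T \bR_k P_k$, so the residual matrix associated with the inexact solution $\widetilde{\vec{\alpha}}_k$ is, by definition,
$$
\vec{\varrho}_k = P_k^\T \bR_k P_k - P_k^\T {\cal L}(P_k \widetilde{\vec{\alpha}}_k P_k^\T) P_k.
$$
First I would substitute the definition $\widetilde \bR_{k+1} = \bR_k - {\cal L}(P_k \widetilde{\vec{\alpha}}_k P_k^\T)$ into the quantity $P_k^\T \widetilde \bR_{k+1} P_k$ and distribute the left and right multiplications by $P_k^\T$ and $P_k$, using the linearity of $\cal L$. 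This gives $P_k^\T \widetilde \bR_{k+1} P_k = P_k^\T \bR_k P_k - P_k^\T {\cal L}(P_k \widetilde{\vec{\alpha}}_k P_k^\T) P_k$, which is exactly $\vec{\varrho}_k$, establishing the first identity.

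For the second identity, I would invoke the exact orthogonality property established earlier for the true coefficient, namely $P_k^\T \bR_{k+1} P_k = 0$, which follows from the subspace orthogonality condition~\eqref{eq:Rk1orthPk} satisfied by the exact $\vec{\alpha}_k$. Then, by adding and subtracting the exact residual,
$$
P_k^\T (\widetilde \bR_{k+1} - \bR_{k+1}) P_k = P_k^\T \widetilde \bR_{k+1} P_k - P_k^\T \bR_{k+1} P_k = P_k^\T \widetilde \bR_{k+1} P_k,
$$
since the second term vanishes. Combining this with the first identity yields the stated chain of equalities.

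There is essentially no serious obstacle here: the whole argument is bookkeeping of the definitions of $\widetilde \bR_{k+1}$, of the projected residual $\vec{\varrho}_k$, and of the exact orthogonality relation. The only point requiring mild care is to use the projected residual evaluated at the inexact $\widetilde{\vec{\alpha}}_k$ (not at the exact $\vec{\alpha}_k$, for which it would be zero) and, symmetrically, the \emph{exact} residual $\bR_{k+1}$ when invoking the vanishing projection $P_k^\T \bR_{k+1} P_k = 0$. The result makes transparent that the loss of local orthogonality is governed entirely by the accuracy $\vec{\varrho}_k$ with which the reduced problem is solved.
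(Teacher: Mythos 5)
Your proof is correct and follows essentially the same route as the paper: both reduce the first identity to the definition of the projected residual and obtain the second from the exact orthogonality $P_k^\T \bR_{k+1} P_k = 0$ of \eqref{eq:Rk1orthPk}. The only cosmetic difference is that you work directly with $\vec{\varrho}_k = P_k^\T \bR_k P_k - P_k^\T {\cal L}(P_k \widetilde{\vec{\alpha}}_k P_k^\T) P_k$ rather than routing through the error $\vec{\epsilon}_k = \widetilde{\vec{\alpha}}_k - \vec{\alpha}_k$ as the paper does, which in fact keeps the sign convention for $\vec{\varrho}_k$ cleanly consistent throughout.
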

\begin{proof}
We notice that $\vec{\varrho}_k = P_k^\T {\cal L}(P_k {\vec{\epsilon}}_k P_k^\T)P_k$.
The residual recurrence gives
$\widetilde \bR_{k+1}  = \bR_{k}  -  {\cal L}(P_k \widetilde{\vec{\alpha}}_k P_k^\T) =
	\bR_{k+1}  -  {\cal L}(P_k {\vec{\epsilon}}_k P_k^\T)$.
Hence,
$P_k^\T \tilde \bR_{k+1} P_k =
	P_k^\T \bR_{k+1}P_k   -  P_k^\T {\cal L}(P_k{\vec{\epsilon}}_k P_k^\T) P_k =
\vec{\varrho}_k$.
\end{proof}

A similar relation holds for the residual matrix associated with the
reduced matrix equation yielding the coefficient $\widetilde{\vec{\beta}}_k$.

Inexactness also implies loss of
orthogonality with respect to the previous iterates.
However, in a context where all iteration
 factors are anyway truncated, we do not expect this truncation to have
particularly strong implications.
In most of our numerical experiments we compute 
$\vec{\alpha}_k$ and $\vec{\beta}_k$ \emph{exactly}, by solving the 
related linear systems by a direct solver. 
Indeed, thanks to the rather moderate caps on the rank of the iterates we 
adopt, this operation does not remarkably affect the computational cost 
of the overall iterative solver.

\section{Preconditioning}\label{sec:precond}
As the number of iterations increases, the iterates rank grows,
possibly compelling a systematic use of truncation.
Since information may be lost during truncation, strategies
to accelerate convergence so as to decrease  the number of iterations
need to be devised. As in standard \cg, preconditioning
the coefficient operator is a natural strategy. 
Equipping Algorithm~\ref{alg:subspaceCG} with a preconditioner 
does not follow the same exact lines as what is done for matrix-oriented  \cg.
Indeed, the different computation 
of $\vec{\alpha}_k$ and $\vec{\beta}_k$ leads to a different 
handling of the preconditioned quantities. We derive the transformed
recurrence closely following the procedure in \cite[Section 11.5.2]{Golub.VLoan.13} 
employed for vector \cg.

Consider a preconditioning operator 
$\mathcal{P}:\mathbb{R}^{n_A\times n_B}\rightarrow\mathbb{R}^{n_A\times n_B}$ defined by
symmetric matrices, and positive definite with respect to the matrix inner product. 
We restrict our attention to invertible operators such that
\begin{eqnarray}\label{eqn:prec}
	{\cal P}^{-1}( Y^l (Y^r)^\T) = [\bH_1^{l} Y^l, \ldots, \bH_t^{l} Y^l] \vec{\kappa}
	[\bH_1^{r} Y^r, \ldots, \bH_t^{r} Y^r]^\T =  [\bH_\star^{l} \bullet Y^l] \vec{\kappa} [\bH_\star^{r} \bullet Y^r]^\T,
\end{eqnarray}
for some matrix $\vec{\kappa}$ of conforming dimensions.
Under these hypotheses, there exists a nonsingular symmetric
operator $\mathcal{G}:\mathbb{R}^{n_A\times n_B}\rightarrow\mathbb{R}^{n_A\times n_B}$ such
that\footnote{In Kronecker form, 
${\cal G}$ corresponds to the square root of the positive definite Kronecker form
of ${\cal P}$.}
$\mathcal{P}(\bX)=\mathcal{G}(\mathcal{G}(\bX))$.
In place of $\mathcal{L}(\bX)=\bC$, we can thus solve the equivalent equation
$$\mathcal{G}^{-1}(\mathcal{L}(\mathcal{G}^{-1}(\mathcal{G}(\bX))))=\mathcal{G}^{-1}(\bC).$$
By setting $\widetilde{\mathcal{L}}=
\mathcal{G}^{-1}\mathcal{L}\mathcal{G}^{-1}:\mathbb{R}^{n_A\times n_B}\rightarrow\mathbb{R}^{n_A\times n_B}$,
$\widetilde{\bX}=\mathcal{G}(\bX)$, 
and $\widetilde{\bC}=\mathcal{G}^{-1}(\bC)$, we now apply 
the new scheme to the equation
$\widetilde{\mathcal{L}}(\widetilde{\bX})=\widetilde{\bC}$.

By starting with $\widetilde{\bR}_0=\widetilde{\bC}-\widetilde{\mathcal{L}}(\widetilde{\bX}_0)=\mathcal{G}^{-1}(\bC-\mathcal{L}(\bX_0))$ for a given $\widetilde{\bX}_0$, and setting $\widetilde{\bP}_0=\widetilde{\bR}_0$, the \sscg\ iteration becomes
$$
\widetilde{\bX}_{k+1}=
\widetilde{\bX}_{k}+\widetilde P_k^l\widetilde{\vec{\alpha}}_k (\widetilde P_k^r)^{\T}, \quad
\widetilde{\bR}_{k+1}=
\widetilde{\bC}-\widetilde{\mathcal L}(\widetilde{\bX}_{k+1}), \quad
\widetilde{\bP}_{k+1}=
\widetilde{\bR}_{k+1}+\widetilde P_k^l\widetilde{\vec{\beta}}_k(\widetilde P_k^r)^{\T}.
$$
This can be rewritten as
$
\bX_{k+1}=\bX_{k}+\mathcal{G}^{-1}(\mathcal{G}^{-1}( P_k^l\vec{\alpha}_k (P_k^r)^{\T}))=\bX_{k}+\mathcal{P}^{-1}( P_k^l\vec{\alpha}_k (P_k^r)^{\T})=\bX_{k}+ \widehat P_k^l\widehat{\vec{\alpha}}_k (\widehat P_k^r)^{\T},
$
where $\widehat P_k^l\widehat{\vec{\alpha}}_k (\widehat P_k^r)^{\T}$ is the (possibly low rank)
factorization of the result of applying ${\cal P}^{-1}$.
Moreover, $\bR_{k+1}=\bC-\mathcal L(\bX_{k+1})$ and
\begin{align*}
\mathcal{G}^{-1}(\bP_{k+1})=&\,\mathcal{G}^{-1}(\bR_{k+1})+\mathcal{G}^{-1}( P_k^l\vec{\beta}_k(P_k^r)^{\T}).
\end{align*}
By applying $\mathcal{G}^{-1}$ to the last relation we get $\mathcal{P}^{-1}(\bP_{k+1})=\mathcal{P}^{-1}(\bR_{k+1})+\mathcal{P}^{-1}( P_k^l\vec{\beta}_k(P_k^r)^{\T})$, namely
$$
\widehat{\bP}_{k+1}=\bZ_{k+1}+\widehat P_k^l\widehat{\vec{\beta}}_k(\widehat P_k^r)^{\T},
\quad \text{where }\bZ_{k+1}:=\mathcal{P}^{-1}(\bR_{k+1}).
$$

\vskip 0.05in
\begin{remark}
{\rm 
The computation  $\mathcal{P}^{-1}(\bP_{k+1})=
\mathcal{P}^{-1}(\bR_{k+1})+\mathcal{P}^{-1}( P_k^l\vec{\beta}_k(P_k^r)^{\T})$
above  should in fact be understood as that the range of
$\mathcal{P}^{-1}(\bP_{k+1})$ equals the range of
$\mathcal{P}^{-1}(\bR_{k+1}+ P_k^l\vec{\beta}_k(P_k^r)^{\T})$. In particular, by using
(\ref{eqn:prec}), the
range of $\mathcal{P}^{-1}(\bP_{k+1})$ is obtained by using the
range of $[\bH_\star^l\bullet R_{k+1}, \bH_\star^l\bullet P_k^l]$.
The same holds for the transpose of $\mathcal{P}^{-1}(\bP_{k+1})$.
$\hfill\square$
}
\end{remark}
\vskip 0.05in

To sum up, the preconditioned variant of the subspace conjugate-gradient method (\sscg)
starts by setting $\bZ_0:=\mathcal{P}^{-1}(\bR_0)$ and $\bP_0=\bZ_0$ and then at
the $k$th iteration it proceeds as (in the final implementation each of these
assignments will undergo a proper truncation step for generating the low-rank factors)
\begin{align*}
\bX_{k+1}=&\,\bX_{k}+P_k^l\vec{\alpha}_k (P_k^r)^{\T}, \quad
\bR_{k+1}=\bC-\mathcal L(\bX_{k+1})\\
\bZ_{k+1}=&\,\mathcal{P}^{-1}(\bR_{k+1}),\quad
\bP_{k+1}=\bZ_{k+1}+ P_k^l\vec{\beta}_k (P_k^r)^{\T}.
\end{align*}
The matrix $\vec{\alpha}_k$ is again the minimizer of 
$\phi(\vec{\alpha})$ in~\eqref{eq:min:alpha}, with the newly
``preconditioned'' directions $P_k^l, P_k^r$.
To maintain the $\mathcal{L}$-orthogonality of the directions $\bP_i$'s, 
we compute $\vec{\beta}_k$ as the solution to the projected equation
$$
(P_k^l)^{\T}\mathcal{L}(P_k^l\vec{\beta}_k(P_k^r)^{\T})P_k^r=
-(P_k^l)^{\T}\mathcal{L}(\bZ_{k+1})P_k^r.
$$
We are left to select the actual preconditioning operator to perform
$\bZ_{k+1}=\mathcal{P}^{-1}(\bR_{k+1})=
\mathcal{P}^{-1}(R_{k+1}^l\bm{\rho}_{k+1}(R_{k+1}^r)^{\T})$.
 Most preconditioning operators in the relevant literature are of the form
$$\mathcal{P}_1(\bX)=\bE\bX\bD, \quad\text{or}\quad \mathcal{P}_2(\bX)=\bE\bX\bD+\bF\bX\bG.$$
Indeed, applying $\mathcal{P}^{-1}$ 
turns out to be affordable only when $\mathcal{P}$ is itself a linear operator with 
at most two terms; see, 
e.g.,~\cite{Biolietal2024},\cite{Voet2023},\cite{Powell2009},\cite{Ullmann2010},\cite{Stoll.Breiten.15}.
The operation $\mathcal{P}_1^{-1}$ corresponds to inverting $\bE$ and $\bD$, 
namely $\mathcal{P}^{-1}_1(\bX)=\bE^{-1}\bX \bD^{-1}$, thus
perfectly matching the condition (\ref{eqn:prec}). On the other hand,
to comply with (\ref{eqn:prec}), the application of 
${\cal P}_2^{-1}$ can
be performed by using a method like low-rank ADI (LR-ADI)
 for Sylvester equations (see \cite{ADISylv}),
whose approximate solution can be shown to satisfy this expression;
see \cite[Proposition 3.1]{Druskin.Knizhnerman.Simoncini.11}.
For completeness we mention that to the best of our knowledge, 
the only option where $\mathcal{P}$ has more than two terms is proposed 
in~\cite[Section 4]{Voet2023} where $\mathcal{P}^{-1}$ is computed as an 
approximation to $\mathcal{L}^{-1}$ in Kronecker form with $q$ (possibly sparse) terms. 

In all our experiments, {whenever $\mathcal{P}_2$ is employed we apply it by running} $t_{ADI}$ iterations of
LR-ADI for Sylvester equations,
where we use $t_{ADI}$ (sub)optimal Wachspress’ shifts~\cite{ADIshifts}.
 According to (\ref{eqn:prec}), the resulting
left and right factors will each  have $t_{ADI}\cdot r_{k+1}$ columns whenever
the input factor $R_{k+1}^l$ has $r_{k+1}$ columns. We stress that the algorithm
in \cite{ADIshifts} uses the same shifts for the left and right
sequences. \rev{Our implementation of LR-ADI is the same as the one 
proposed in \cite{Biolietal2024}, except for the matrix sparsification\footnote{\rev{The code made
available by the authors in the repository cited in \cite{Biolietal2024} used matrices in full format, instead
of sparse format}.}.}
We have not \rev{further} modified the code, since we used ${\cal P}_2$ in a context
when the left and right shifts could be the same.

%
To limit memory allocations, and according to what was already
described for the application of the operator $\cal L$,
we perform a {\it dynamic} 
low-rank truncation of the current iterate factors 
{\it at each} LR-ADI iteration. If this truncation is based on
a {\tt maxrank} policy (cf. 
section~\ref{sec:truncation}), this implementation of LR-ADI
allocates at most $4\cdot\mathtt{maxrank}$ columns
(half of which for either the right or left factor)
regardless of the number of iterations.  Since this procedure
can significantly worsen the preconditioner quality, it should
only be adopted under
severe storage constraints.

\section{The complete algorithm}\label{sec:final_algo}

The new preconditioned subspace-conjugate gradient method (\sscg)
for multiterm Sylvester equations 
is summarized in Algorithm~\ref{alg:alphaCG},
equipped with the computational 
advances described in the previous sections.

Special attention deserves the stopping criterion.
While the randomized procedure is able to remarkably
reduce the memory requirements of the overall solution process, 
it does not construct an approximation of 
the norm  of the residual matrix $\bC-\mathcal{L}(\bX_{k+1})$ that can be used 
to assess the accuracy of $\bX_{k+1}$.
As an alternative common choice (see, e.g., \cite{Powelletal2017}), we 
monitor the difference between two consecutive approximate solutions as stopping criterion:
%
\begin{equation}\label{eqn:stopping}
\|\bX_{k+1}-\bX_k\|_F/\|\bX_{k+1}\|_F\leq\mathtt{tol}.
\end{equation}
The computation of the Frobenius norms exploits
the fact that the columns $X_j^l$ and $X_j^r$ are kept orthonormal for every $j$, so that
$\|X_{k+1}^l\bm{\tau}_{k+1}(X_{k+1}^r)^{\T}\|_F=\|\bm{\tau}_{k+1}\|_F,$
and
$$
\|X_{k+1}^l\bm{\tau}_{k+1}(X_{k+1}^r)^{\T}-X_{k}^l\bm{\tau}_{k}(X_{k}^r)^{\T}  \|_F^2
=\|\bm{\tau}_{k+1}\|_F^2+\|\bm{\tau}_{k}\|_F^2-2
\text{trace}
(\bm{\tau}_{k+1}(X_{k+1}^r)^{\T}X_{k}^l\bm{\tau}_{k}(X_{k}^r)^{\T}X_{k+1}^l).
$$

\vskip 0.05in
\rev{
\begin{remark}
It is known that when used with iterative methods, the stopping criterion in (\ref{eqn:stopping}) may be 
sensitive to the ill-conditioning of the problem, as a small relative difference
does not necessarily correspond to a small residual. In our setting, however, we recall that 
stagnation of the approximate solution is more likely to occur as an intrinsic effect of 
the truncation step.
The criterion (\ref{eqn:stopping}) was chosen because computing 
the true residual norm is expensive, as previously discussed.
Nonetheless, if one is interested in monitoring the residual,
a careful implementation may consider including an estimation of the true residual norm
once the criterion (\ref{eqn:stopping}) is satisfied. In case such estimate is not satisfactorily small,
the iteration will continue a few more steps, until the residual norm either
converges or stagnates, or the
maximum number of allowed iterations is reached. The estimation of the true residual 2-norm
can be obtained -- not inexpensively -- by running a few iterations of a sparse SVD iterative method
using the factorized version of ${\bm R}_{k+1}$.
This variant is not included in the experiments below. However, the true residual norm was
computed at completion for all considered methods.
\end{remark}
}
\vskip 0.05in

Concerning the preconditioning step, when 
writing $\mathcal{P}^{-1}(R_{k+1}^l\bm{\rho}_{k+1}(R_{k+1}^r)^\T)$ in
lines~\ref{alg:Zline1} and~\ref{alg:Zline2} we mean that the preconditioner 
is applied as in \eqref{eqn:prec}, without explicitly assembling 
$R_{k+1}^l\bm{\rho}_{k+1}(R_{k+1}^r)^\T$.
The result of this operation can be further truncated by the QR-SVD operator 
$\mathcal{T}$ in~\eqref{eq:QRSVD}.
The shorthand notations {\tt params} and {\tt params\_res} for $\mathcal{T}$ and $\mathcal{T}_{res}$ resp., indicate the inclusion of all the necessary input parameters. 

Finally, concerning  memory requirements, 
in addition to all iterates' factors -- each requiring (at most) 
$\mathtt{maxrank}(n_A+n_B)+\mathtt{maxrank}^2$ allocations -- the method uses
working storage, reported in Algorithm~\ref{alg:alphaCG}. In particular, the storage allocation needed by $\mathcal{T}_{res}$ (lines~\ref{alg:lineTres1} and~\ref{alg:lineTres2}) depends on the adopted procedure: $(\ell\mathtt{maxrank}+s_C)(n_A+n_B)$ for the QR-SVD truncation or $\mathtt{maxrankR}(n_A+n_B)$ for the randomized strategy illustrated in section~\ref{sec:truncation}. 
{Similarly, in line~\ref{alg:Zline1} and line~\ref{alg:Zline2} 
memory requirements for $\bZ_k$ correspond to $\mathtt{maxrankR}(n_A+n_B)$ for
${\cal P}_1$, and they will be higher when using ${\cal P}_2$.}
Memory requirements are
comparable with those of {\sc tcg}, except for $s_k^4$ in line~\ref{alg:linesk4} {that is due to the allocation of the coefficient matrix of the Kronecker form of~\eqref{eq:prop:alpha:thesis}}.

\begin{algorithm}[bt]
{\footnotesize
\begin{algorithmic}[1]
\smallskip
\Statex \textbf{Input:} Operator
$\mathcal L:\mathbb{R}^{n_A\times n_B}\rightarrow \mathbb{R}^{n_A\times n_B}$,
preconditioner
$\mathcal P:\mathbb{R}^{n_A\times n_B}\rightarrow \mathbb{R}^{n_A\times n_B}$,
right-hand side factors $C_1$, $C_2$ $(\bC=C_1C_2^{\T})$, 
initial guess factors $X_0^l$, $X_0^r$, $\bm{\tau}_0$ $(\bX_0=X_0^l\bm{\tau}_0(X_0^r)^{\T})$, 
max no. iterations $\texttt{maxit}$, 
tolerance $\texttt{tol}$, low-rank truncation tolerances {\tt tolrank, maxrank, maxrankR}, flag {\tt flag\_rsvd}.
\Statex \textbf{Output:} Approximate solution 
factors  $X_k^l$, $X_k^r$, $\bm{\tau}_k$ $(\bX_k=X_k^l\bm{\tau}_k(X_k^r)^{\T})$  such that $\|\bX_k-\bX_{k-1}\|\leq \|\bX_k\| \cdot \texttt{tol}$
\smallskip

\If{{\tt flag\_rsvd}}
\State Create random Gaussians $G^l\in\mathbb{R}^{n_B\times \mathtt{maxrankR}}$,
$G^r\in\mathbb{R}^{n_A\times \mathtt{maxrankR}}$\label{alg:defineOmega}
\textbf{else}
Set $G^l=G^r=\emptyset$
 \EndIf
\State Set $[R_0^l,\bm{\rho}_0,R_0^r]=\mathcal{T}_{res}(C_1, C_2,X_0^l,\bm{\tau}_0,X_0^r,\mathtt{params\_res})$\Comment{{\footnotesize $(\ell\mathtt{maxrank}+s_C)(n_A+n_B)$ or $\mathtt{maxrankR}(n_A+n_B)$}}\label{alg:lineTres1}
\State Compute $[Z_0^l,\bm{\zeta}_0,Z_0^r]=\mathcal{T}(\mathcal{P}^{-1}(R_0^l\bm{\rho}_0(R_0^r)^{\T}),\mathtt{params})$ \Comment{{\footnotesize$\mathtt{maxrank}(n_A+n_B)$ (at least)}}\label{alg:Zline1}
 \State Set $P_0^l=Z_0^l$, $P_0^r=Z_0^r$ \For{$k=0,\ldots,\mathtt{maxit}$}
\State Compute $\vec{\alpha}_k$ by solving \Comment{{\footnotesize$s_k^4$}}
$$
(P_k^l)^\T {\cal L}(P_k^l \vec{\alpha}_k (P_k^r)^\T)P_k^r = (P_k^l)^\T R_k^l\bm{\rho}_k (R_k^r)^{\T} P_k^r
$$ \label{alg:linesk4}
\State Set $[X_{k+1}^l,\bm{\tau}_{k+1},X_{k+1}^r]=\mathcal{T}([X_k^l,P_k^l],\text{blkdiag}(\bm{\tau}_k,\bm{\alpha}_k),[X_k^r,P_k^r],\mathtt{params})$ \Comment{{\footnotesize $2(n_A+n_B)\mathtt{maxrank}$}} \If{(\ref{eqn:stopping}) holds }
\State Return $X_{k+1}^l,\bm{\tau}_{k+1},X_{k+1}^r$
\EndIf
\State Set $[R_{k+1}^l,\bm{\rho}_{k+1},R_{k+1}^r]=\mathcal{T}_{res}(C_1, C_2,X_{k+1}^l,\bm{\tau}_{k+1},X_{k+1}^r, \mathtt{params\_res})$ \Comment{{\footnotesize $(\ell\mathtt{maxrank}+s_C)(n_A+n_B)$ or }}\label{alg:lineTres2}
\Statex \Comment{{\footnotesize $\mathtt{maxrankR}(n_A+n_B)$}}

\State Compute $[Z_{k+1}^l,\bm{\zeta}_{k+1},Z_{k+1}^r]=\mathcal{T}(\mathcal{P}^{-1}(R_{k+1}^l\bm{\rho}_{k+1}(R_{k+1}^r)^{\T}),\mathtt{params})$\Comment{{\footnotesize$\mathtt{maxrank}(n_A+n_B)$ (at least)}}\label{alg:Zline2}
\State Compute $\vec{\beta}_k$ by solving 
$$
(P_k^l)^\T {\cal L}(P_k^l \vec{\beta}_k (P_k^r)^\T)P_k^r = (P_k^l)^\T Z_{k+1}^l\bm{\zeta}_{k+1} (Z_{k+1}^r)^{\T} P_k^r
$$
\State Set $[P_{k+1}^l,\bm{\gamma}_{k+1},P_{k+1}^r]=\mathcal{T}([Z_{k+1}^l,P_k^l],\text{blkdiag}(\bm{\zeta}_{k+1},\bm{\beta}_k),[Z_{k+1}^r,P_k^r],\mathtt{params})$    \Comment{{\footnotesize$2(n_A+n_B)\mathtt{maxrank}$}}

\EndFor
\end{algorithmic}
}
\caption{Preconditioned subspace-conjugate gradient method (\sscg)}\label{alg:alphaCG}
\end{algorithm}

\section{Numerical results}\label{Numerical results}
This section serves as introduction to the upcoming numerical experiments.
All data are either publicly available or can be easily constructed.
Our computational analysis has two goals: first, we illustrate the properties
of the new method. We explore how the choice of the maximum rank
influences the performance. To this end, in all results we always set {\tt tolrank}=$10^{-12}$. Moreover, we report the convergence behavior
when the different strategies of section~\ref{sec:truncation} are adopted to
deal with memory requirements associated to the construction of the residual matrix.

Second, we compare the performance of our new method with that of state-of-the-art
algorithms for the same problems. More precisely:

\vskip  0.05in

{\tpcg}: truncated preconditioned \cg, as recalled in\footnote{A possible implementation 
is available at {\tt http://www.dm.unibo.it/\textasciitilde simoncin/tcg.tar.gz}}
section~\ref{Truncated matrix-oriented CG}. The maximum allocated
memory corresponds to twice {\tt maxrank} long vectors for each recurrence, 
while for the residual computation {we need to allocate $\ell\mathtt{maxrank}+s_C$ long vectors}.
\vskip  0.05in

{\sc sss}: Fixed-point iteration method for multiterm Lyapunov equations, with inexact 
solves with the leading portion of the operator, given by the first two terms%
\footnote{The Matlab code is publicly available at 
{\tt https://www.dm.unibo.it/\textasciitilde simoncin/software.html}.}
\rev{\cite{Shanketal.16}}.
 The
allocated memory cannot be predicted a priori, as it depends on the memory required
by the inner iterative projection solver. In our experiments this is reported a-posteriori, and
it is generally unrelated to {\tt maxrank}. Moreover, the final rank of the approximate
solution is not fixed a-priori, and it is the result of a truncation
to the small threshold $10^{-14}$, as suggested in the original code.
\vskip  0.05in

{\sc r-nlcg}:
Optimization approach approximating the solution on manifolds of maximum-rank matrices through
Riemannian Conjugate Gradient, with incorporation of preconditioners \cite{Biolietal2024}.
It will be used for multiterm Sylvester equations\footnote{The Matlab code is publicly available at {\tt https://github.com/IvanBioli/riemannian-spdmatrixeq}}.
 The algorithm terminates if the gradient norm drops below {\tt tolgradnorm }$ = 10^{-6}\Vert\mat{C}\Vert$ 
or if the norm of the displacement vector (to be retracted) is smaller than {\tt minstepsize }$ = 10^{-6}$.
Memory allocations are not declared in the article. However
scrutiny of the code seems to show that the memory employed is actually significant.
For instance, the residual factors $\bA_\star\bullet X_1$
and $\bB_\star\bullet X_2$ are computed explicitly.
\vskip  0.05in

{MultiRB}: Projection method specifically designed for finite element discretizations
of differential equations with stochastic inputs\footnote{\rev{The Matlab code is publicly available at {\tt https://www.dm.unibo.it/\textasciitilde simoncin/software.html}}} \rev{\cite{Powelletal2017}}. 
The method enforces
a Galerkin condition for the spatial variables, with respect to
a rational Krylov subspace specifically tailored to the problem,
while the random space is not reduced.  As the stopping criterion is based on
tolerance, memory allocations and final rank can only be monitored a posteriori.

\vskip  0.05in
Unless explicitly stated,
for \sscg\ we consider both truncation variants of the
residual matrix, that is the deterministic version with the factor fully allocated
($\mathcal{T}_{res}$ with $G^l=G^r=\emptyset$, labeled \sscg\ \emph{determ})
and the randomization-based one ($\mathcal{T}_{res}$
with $G^l=G^r\neq\emptyset$, labeled \sscg\ \emph{rand'zed}).
In all instances, the solution of the matrix equations to determine ${\bm \alpha}_k$
and ${\bm \beta}_k$ was carried out with the problem in Kronecker form up to dimension
$4000$ of the Kronecker matrix.
{For all CG-type methods and MultiRB, the stopping criterion 
was based on (\ref{eqn:stopping})
while a cheap bound is used for {\sc sss}; except for Example~\ref{ex:param},
the stopping  tolerance was set to $10^{-6}$. 
Algorithm {\sc r-nlcg} used multiple stopping criteria, with values set above.}
\rev{This parameter tuning ensured that all the different solvers attain similar solutions, in general. In particular, the final true residual norm was computed at completion (but excluded
in the total costs), to double check that all solutions have comparable accuracy.}
{The running time is marked whenever
the residual norm was smaller (over-solving) or larger (under-solving) by at least
one order of magnitude with respect to those of the other  methods.}

{All the experiments have been run using Matlab (version 2024b) on a machine
with a 4.4GHz Intel 10-core CPU, including two high-performance cores and eight high-efficiency cores,
 equipped with i5 processor with 16GB RAM on an Ubuntu 2020.04.2
LTS operating system.}
\subsection{Numerical experiments for the
multiterm Lyapunov equation}\label{sec:expes_lyap}
In this section we report a selection of results from our computational experience with Algorithm~\ref{alg:alphaCG} applied to the multiterm Lyapunov equation
\begin{equation}\label{eqn:Lsym}
\bA \bX + \bX \bA + \bM \bX \bM = \bC.
\end{equation}
%
For this problem, preconditioning is naturally two-term (cf. $\mathcal{P}_2$ in section~\ref{sec:precond}), especially if the
operator ${\cal L}_0\,:\, \bX \rightarrow \bA \bX + \bX \bA$ is, in some sense, the dominant part of
the whole $\mathcal{L}$. Both examples below thus use ${\cal L}_0$ as
preconditioner, and the action of its inverse is approximated as described in section~\ref{sec:precond}.

%

\begin{example}\label{ex:diffreac}
{\rm
This first example aims at illustrating the convergence behavior of the new method
with respect to the chosen values   of  the parameters {\tt maxrank} and {\tt tol}.
We  thus focus on number of iterations as  quality measure,
postponing to subsequent experiments the use of running time.
We consider the discretization by centered finite differences of 
the partial differential equation
$$
(\theta(x) u_x)_x + (\theta(y) u_y)_y
 + \gamma(x,y) u = f,\,\, \mbox{ with}\,\, (x,y)\in (0,1)^2,
$$
and Dirichlet zero boundary conditions. 
Here $f$ is constant and equal to one in the whole domain, while
$\theta(z) = -\frac 1 {10} \exp(-z)$.
Each factor in the second order term can be discretized by a
three-point stencil that deals with one-dimensional
second order derivatives with nonconstant coefficients. By doing so, we obtain a matrix $\bA$ which is symmetric, since we are working on the unit square discretized with a uniform mesh, and tridiagonal having components
$$
\bA = 
\frac 1 {h^2}{\rm tridiag}(A_{i,i-1}, {A_{i,i}}, A_{i,i+1}), \qquad 
A_{i,i\pm 1}=\theta(x_{i\pm \frac 1 2}), \, 
A_{i,i}=-(\theta(x_{i- \frac 1 2})+\theta(x_{i+\frac 1 2})), 
$$
where the $x_j$s are the discretization nodes in each direction, and $x_{j\pm \frac 1 2}$
are values at the midpoint of each discretization subinterval.
	The reactive coefficient $\gamma(x,y)$ is separable,
that is $\gamma(x,y) = \gamma_0(x)\gamma_0(y)$, for
two different settings:
\vskip 0.1in
i) $\gamma_0(z) =  \sin(z\pi)$, with $z\in (0,1)$;
\hskip 0.4in ii) $\gamma_0(z) =  \exp(z\pi)$, with $z\in (0,1)$.
\vskip 0.1in
At the discrete level, the reactive term can thus be represented 
by $\bM\bX\bM$ with $\bM$ diagonal and having on its diagonal the 
nodal values of $\gamma_0$.
The discretization  employs $n_A=8000$ interior nodes in each direction
so that $\bA,\bM\in\mathbb{R}^{n_A\times n_A}$.

\begin{figure}[t]
\centering
\includegraphics[scale=0.3,height=1.9in]{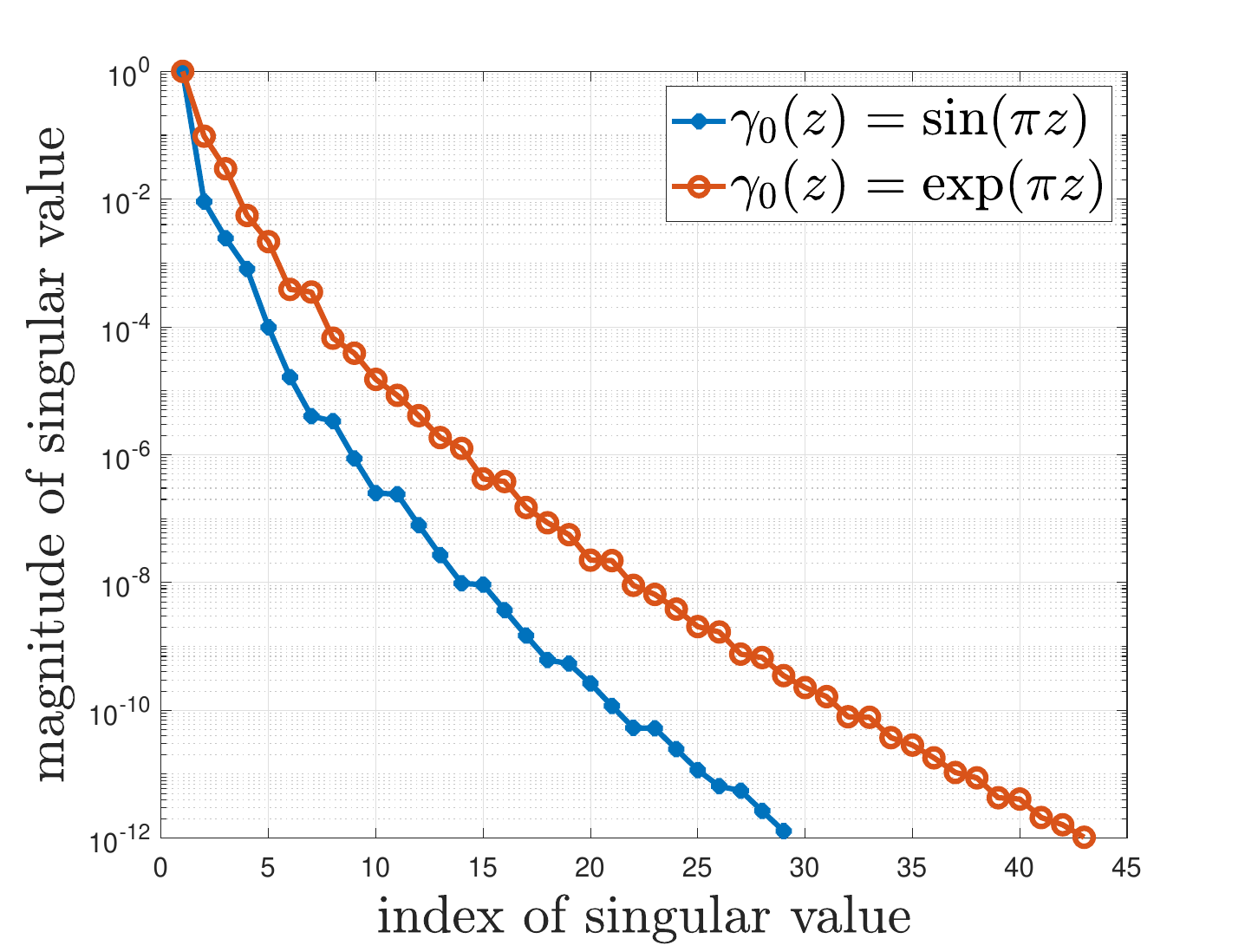}
\includegraphics[scale=0.46,height=1.9in]{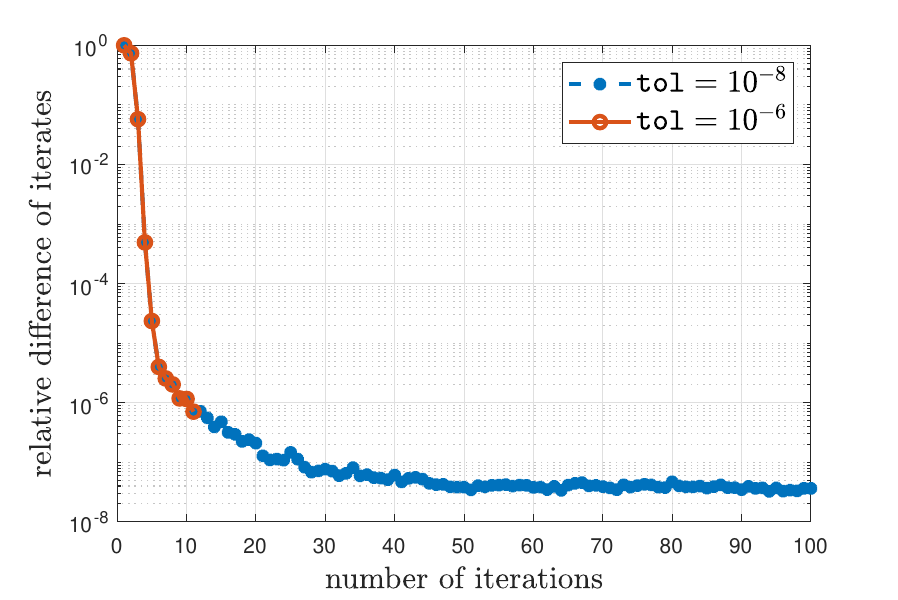}
\caption{Example \ref{ex:diffreac}. Left: Singular value distribution
of the approximate solution matrix $\bX$ for $\gamma_0(z)=\sin(z\pi)$ 
and $\gamma_0(z)=\exp(z\pi)$. Right:
Convergence history of \sscg\ for $\gamma_0(z)=\exp(z\pi)$, different stopping tolerances {\tt tol}, and
fixed {\tt maxrank}=20.  \label{fig:svdX}}
\end{figure}

The left plot In Figure~\ref{fig:svdX} reports the leading singular value distribution of
the solution $\bX$  (obtained with higher accuracy than in the following experiments)
for $\gamma_0(z) =  \sin(z\pi)$ and $\gamma_0(z) =  \exp(z\pi)$.
The different singular value decay for the two cases is clearly visible,
providing insight into what to expect when running truncation-based methods:
the slower decay for $\gamma_0(z)=\exp(z\pi)$ suggests 
that the method will require higher rank iterates to be able to compute 
an accurate solution in this case. In other words, the stopping tolerance and
the maximum rank cannot be selected disjointly.


Table~\ref{tab:pde_analysis} shows the performance of \sscg\ in terms of
number of iterations, for different choices
of {\tt maxrank} and the final {\tt tol}. 
The two-term preconditioner ${\cal P}_2\equiv\mathcal{L}_0$ was used, running
$t_{ADI}=8$ LR-ADI iterations. For this test case, no randomization is adopted,
so as to analyze the fully deterministic setting. Truncation is driven by the 
{\tt maxrank} parameter.
%
For the reactive coefficient $\gamma_0(z) =  \sin(z\pi)$, the fast singular value
decay ensures convergence in few iterations for both tested tolerances for the chosen
 very small maximum rank.
As suggested by the discussion above on the decay of the singular values 
of $\bX$, the scenario changes for $\gamma_0(z)=\exp(z\pi)$: 
for {\tt maxrank}$=20$, \sscg\ convergences rather fast if
{\tt tol}$=10^{-6}$, but it
is not able to meet the prescribed accuracy in 100 iterations if {\tt tol}$=10^{-8}$.
The right plot of 
Figure~\ref{fig:svdX} reports the convergence detail for {\tt maxrank}=20 and the two different
values of {\tt tol}: for {\tt tol}$=10^{-8}$ stagnation can be observed around the 
threshold, as this choice of {\tt maxrank}
is too small to capture the first {\tt maxrank} singular values of $\bX$ sufficiently well.
As can be seen in Table~\ref{tab:pde_analysis}, this issue gets fixed by increasing the value 
of {\tt maxrank} with a remarkable reduction in the iteration count by increasing
the maximum rank further.
This pattern is typical in all our subsequent experiments, giving as feedback that if
stagnation occurs, the value of
{\tt maxrank} should be increased or the tolerance {\tt tol}
relaxed. The joint selection of
these  two parameters is distinctive of truncation-based strategies.
}
\end{example}

\begin{table}[t]
{\footnotesize
\centering
\begin{tabular}{|c|c|c|r|}
\hline
$\gamma_0$ & {\tt maxrank} & {\tt tol} & $\#$ iter \\
\hline
\multirow{ 2}{*}{$\sin(z\pi)$}
 & 20 & $10^{-6}$ & 5 \\
           & 20 & $10^{-8}$ & 7 \\
\hline
\end{tabular}
\,
\begin{tabular}{|c|c|c|r|}
\hline
$\gamma_0$ & {\tt maxrank} & {\tt tol} & $\#$ iter \\
\hline
\multirow{ 4}{*}{$\exp(z\pi)$}
 & 20 & $10^{-6}$ & 10 \\
           & 20 & $10^{-8}$ &  --  \\
           & 30 & $10^{-8}$ & 17 \\
           & 40 & $10^{-8}$ & 5 \\
\hline
\end{tabular}
\caption{Example~\ref{ex:diffreac}. Number of iterations for \sscg\ as {\tt maxrank} and final tolerance {\tt tol} vary,
for different reactive term coefficient $\gamma_0$.
`` -- '' stands for no convergence in 100 iterations.
\label{tab:pde_analysis}}
}
\end{table}

\begin{example}\label{ex:SSS}
{\rm
We consider an example stemming from the control of dynamical systems,
first discussed in \cite{BennerBreiten2013} and then used in \cite{Shanketal.16} for
comparison purposes.
The matrices correspond to the discretization of a heat model problem
in the spatial domain $(0,1)^2$, so that $\bA$ is the
discretization of the 2D Laplace operator, and $\bM=\bN \bN^\T$ is a low rank 
matrix (with rank the square root of
the dimension of $\bA$), allocating Robin conditions
$\bar{{\bm n}}\cdot \nabla(x) = \delta u(x-1)$ on one of the domain boundaries,
while zero Dirichlet conditions are imposed on the rest of the
boundary. In our experiments we consider 
$\delta \in \{0.5, 0.9\}$.
 The Lyapunov problem for $\delta=0.5$ was called {\sc heat1} in
\cite{Shanketal.16}. We name {\sc heat1}$(\delta)$ the two settings where $\delta=0.5, 0.9$.
We consider two discretization levels leading to
a matrix problem of dimension $n_A=320^2=102 400$ as in~\cite{Shanketal.16}, and
$n_A=500^2=250 000$.
%
Both \sscg\ and \tpcg\ are preconditioned by
$\mathcal{L}_0:\bX\rightarrow \bA\bX + \bX \bA$ by running $t_{ADI}=8$ LR-ADI iterations.
For all considered methods, the accuracy tolerance is $\mathtt{tol}=10^{-6}$.

\begin{table}
\footnotesize{
\begin{center}
\begin{tabular}{|c|c|c|r|r|r|r|}
\hline
Example & $ n_A$ & {\tt maxrank}  & {\sc sss} &  \tpcg\ & \sspcg\ &  \sspcg\  \\
&  &   &(iter/alloc/rank) &   & determ.  & rand'zed \\
\hline
\multirow{ 6}{*}{{\sc heat1}(0.5)}
& $102400$ & $20$ &  & 61.35 ($16$) & -- ($100$) &	-- ($100$) \\ 
&         & $30$ &&  22.78 ($4$) & 17.93 ($3$) & 18.17 ($3$)\\
&         & $$ & {\bf 6.15} ($7/126/31$) &  &  &			 \\
& $250000$ & $20$ &  & -- ($100$) & -- ($100$) &	-- ($100$)  \\ 
&         & $30$ &   & 60.84 ($4$) & 64.46 ($4$) & 64.45 ($4$)\\
&         & $$ & {\bf 18.35} ($7/139/31$) &  &  & 			\\%
\hline
\multirow{ 7}{*}{{\sc heat1}(0.9)}
& $102400$ 	& $40$ & & -- ($100$) & -- ($100$) & -- ($100$) \\
&  	& $50$ & & 310.72 ($26$)   & {\bf 58.11} ($5$) & 58.52 ($5$)\\
&         & -- & -- ($50/\,/\,$) &  &  & \\
& $250000$ & $50$ && 2401.90 ($93$) & -- ($100$) & -- ($100$)   \\
			&& $60$ &  & 936.39 ($30$) &  {\bf 119.55} ($4$) & 120.43 ($4$)\\
&         & -- & -- ($50/\,/\,$) &  &   & \\
\hline
\end{tabular}
\end{center}
\hskip 0.7in -- no conv.
}
\caption{Example~\ref{ex:SSS}. For each method, running time in seconds, and in parenthesis
the number of iterations. Stopping tolerance
{$10^{-6}$}. For {\sc sss}, number of iterations, the subspace total memory allocation 
for length $n_A$ vectors and the solution rank are
 reported.
\label{ex:BB}}

\end{table}

In Table~\ref{ex:BB} we report the results for both 
{\sc heat1}(0.5) and {\sc heat1}(0.9), and different values of 
$n_A$ and {\tt maxrank}. 
For {\sc heat1}(0.5), the (standard) Lyapunov operator $\mathcal{L}_0$ is 
the dominant part of the whole $\mathcal{L}$. This is the scenario {\sc sss} has been 
designed for. Indeed, from the results in Table~\ref{ex:BB} we can see 
that {\sc sss} converges very fast for both values of $n_A$. 

The operator $\mathcal{L}_0$ is less dominant in {\sc heat1}(0.9). Indeed, 
{\sc sss} has convergence problems: after 50 fixed-point iterations its residual
estimate is still above $10^{-3}$.
On the other hand, both \tpcg\ and \sscg\ converge for sufficiently
large values of {\tt maxrank}. In particular, \sscg\ (both determ. and
rand'zed) requires way fewer iterations than \tpcg\ with consequent remarkable 
benefits in terms of computational timings,  being 
about one order of magnitude faster than \tpcg\ for this problem. Due to the 
small number of terms in $\mathcal{L}$ ($\ell=3$), no notable performance difference 
of \sscg\ \emph{determ.} and \emph{rand'zed} is observed
in terms of either computational timings or memory allocations.
}
\end{example}

\subsection{Numerical experiments for the
multiterm Sylvester equation}\label{sec:expes_sylv}

In this section we consider the more general multiterm Sylvester equation
in (\ref{eqn:main}), with one-term or two-term preconditioning (cf. $\mathcal{P}_1$ and $\mathcal{P}_2$ in section~\ref{sec:precond}, respectively).

\begin{example}\label{ex:param}
{\rm 
We consider a problem used in \cite[section 5.1]{Biolietal2024}, consisting 
of the parameterized diffusion equation 
$- \nabla\cdot (k \nabla u) = 0$ in $(0,1)^2$, with homogeneous boundary conditions
and semi-separable diffusion {coefficient 
\vskip -.2in
$$
k(x,y)= \delta_1 k_{1,x}(x) k_{1,y}(y) + \ldots + 
\delta_{\ell_k} k_{\ell_k,x}(x) k_{\ell_k,y}(y), \quad
{\rm where} \quad
k(x,y)=1 + \sum_{j=1}^{\ell_k-1}  \frac{10^j}{j!} x^j y^j,
$$
with $\ell_k=4$.}
We insert $k(x,y)$ into the equation,
and discretize the second order operator using standard finite differences (see
Example~\ref{ex:diffreac}). We then obtain the matrix equation
$$
\sum_{j=1}^{\ell_k} \delta_j ( A_{j,x} \bX \bD_{j,y} + \bD_{j,y} \bX A_{j,y}) = \bC
$$
where $\bC$ is a  rank-four matrix accounting for the boundary conditions;
see \cite{Biolietal2024}. For $\ell_k=4$ a total of $\ell=8$ terms appear
in the matrix equation.
We observe that the operator  is a Lyapunov operator, however the overall
equation is nonsymmetric, due to the nonsymmetry of the right-hand side  matrix $\bC$.
%
In \cite[Section 5.1]{Biolietal2024}, two-term preconditioning of the form $\mathcal{P}_2$ 
(cf. section~\ref{sec:precond}) was employed,
with the specific selection of the third and forth terms in ${\cal L}$.
Following \cite[Section 3.4]{Biolietal2024}, the preconditioning step in  {\sc r-nlcg} 
operates within a metric-related matrix inner product.
For both \sscg\ and \tpcg, $t_{ADI}=8$ LR-ADI iterations were employed for $n_A=10000$
whereas $t_{ADI}=15$ iterations were necessary for $n_A=102400$. The stopping criterion
used $\mathtt{tol}=5\cdot 10^{-6}$ as threshold.

According to the discussion in section~\ref{sec:precond}, for 
a large number $\ell$ of terms
the use of one-term preconditioning  $\mathcal{P}_1$ may be beneficial.
For both \sscg\ and \tpcg, the left third and right forth terms in ${\cal L}$ were
 used to define ${\cal P}_1$.
The results in Table~\ref{ex_parm:table1} show that ${\cal P}_1$ is
extremely effective for both problem dimensions when associated with \sscg, giving at
least one order of magnitude lower timings than with {\sc r-nlcg}.
The systematic
low number of iterations for small {\tt maxrank} is also remarkable. 
{The use of ${\cal P}_1$ is thus recommended.}

\begin{table}
\footnotesize{
\begin{center}
\begin{tabular}{|c|c|c|r|r|r|r|}
	\hline
	$n$ & Precond  & {\tt maxrank}  & {\sc r-nlcg}&  \tpcg\ & \sspcg\ & \sspcg\    \\
	&\rev{type}  &  & &   & determ.  & rand'zed \\
	\hline
	10000 & $\mathcal{P}_1$ & 20 & -- ($100$) & -- ($100$) & -- ($100$) & -- ($100$)  \\ 
	&$\mathcal{P}_1$ & 40 & -- ($100$) & -- ($100$) & 1.08 ($\;\;5$)& {\bf 0.92} ($\;\;5$) \\ 
	&$\mathcal{P}_1$ & 60 & -- ($100$) & -- ($100$) & 2.47 ($\;\;5$)& {\bf 2.34} ($\;\;5$) \\ 
	&$\mathcal{P}_2$  &
	{20} & 	{\bf 11.25 ($36$)} &  	{ 11.42 ($38$)} & -- ($100$) &  -- ($100$) \\
	&$\mathcal{P}_2$  & 40 & {*42.97 ($36$)} & {\bf 15.54} ($33$)& -- ($100$) &  -- ($100$) \\ 
	&$\mathcal{P}_2$ & 60 & {*98.62 ($35$)} & 32.39 ($28$) & 9.59 ($\;\;5$)& {\bf 8.37} ($\;\;5$)  \\ 
	\hline
	102400 & $\mathcal{P}_1$  & 20 &  -- ($100$) & -- ($100$) & -- ($100$) & -- ($100$)  \\ 
	&$\mathcal{P}_1$ & 40 & $\dagger$ & -- ($100$) & 18.17 ($\;\;6$) & {\bf 8.74} ($\;\;6$) \\ 
	&$\mathcal{P}_1$ & 60 & $\dagger$ & -- ($100$) & 23.50 ($\;\;5$) &  {\bf 16.93} ($\;\;5$) \\ 
	&$\mathcal{P}_2$   & 20 & {\bf  183.44} ($41$) &  -- ($100$) & -- ($100$) &  -- ($100$)  \\
	&$\mathcal{P}_2$ & 40 & $\dagger$ &  446.94 ($47$)& -- ($100$) &  -- ($100$) \\ 
	&$\mathcal{P}_2$ & 60 & $\dagger$ & 884.20 ($26$) & 115.73 ($\;\;3$)& {\bf 101.91} ($\;\;3$)  \\ 
	\hline
\end{tabular}

\end{center}
\hskip 0.7in -- no conv. \hskip 0.3in * Lower final residual norm than other methods \hskip 0.3in {$\dagger$ Out of Memory}
}
        \caption{Example~\ref{ex:param}. For each method, running time in seconds, and in parenthesis
the number of iterations. Stopping tolerance
{$\mathtt{tol}=5\cdot10^{-6}$}. 
\label{ex_parm:table1}}
\end{table}
}
\end{example}


\begin{example}\label{ex:stoch}
{\rm 
We consider a multiterm Sylvester equation
arising from the Galerkin approximation of the following two dimensional
 elliptic PDE problem with correlated random inputs,
\begin{eqnarray}\label{eqn:pdestoch}
- \nabla\cdot (a(x,\omega) \nabla u) &=& f\quad \text{in}\,\, D, \quad
u(x,\omega)=0, \quad \text{on}\,\, \partial D.
\end{eqnarray}
Here $\omega\in\Omega$, where $\Omega$ is a sample space associated with a
proper probability space; 
see, e.g., \cite[Chapter 9]{LordPowellShardlow.14}, and $D\subset {\mathbb R}^2$ is the 
space domain.
The diffusion coefficient is assumed to be a random field, with 
expansion 
in terms of a finite number of real-valued independent random variables $\{\xi_j\}_{j\le \ell-1}$
defined in $\Omega$. Following the derivation in \cite{Powelletal2017},
we consider a truncated Karhunen-Lo{\`e}ve expansion, giving
$a(x,\omega)=\mu(x) + \sigma \sum_{j=1}^{\ell-1}  \sqrt{\lambda_j} \phi_j(x) \xi_j(\omega)$,
where $\mu$ corresponds to the diffusion coefficient expected value, $\sigma$ is
the standard deviation, while $(\lambda_j, \phi_j)$ are the leading eigenpairs of
the associated covariance matrix. 
Under proper hypotheses on the coefficients, the
problem is well posed, and its Galerkin finite element discretization on a tensor space
(see \cite{Powelletal2017})
 gives an algebraic problem of type (\ref{eqn:main}) with $\ell$ terms:
 the matrices $\bA_i$ account for the spatial discretization terms,
while the matrices $\bB_i$ contain the discretized weighted moments in the
random basis. The right-hand side is a rank-one matrix $\bC=f_0 e_1^\T$, where
$f_0$ is the finite element discretization of the forcing term in (\ref{eqn:pdestoch}).

In our experiments we first 
consider the data
corresponding
to Example 5.2 and Example 5.1 in \cite{Powelletal2017};
the second example was also used in \cite{Biolietal2024}.  
The $\bA_i$s have size $n_A=16\,129$,
while the $\bB_i$s have size $n_B=1287$ and $n_B=2002$, respectively.
The problems have $\ell=9$ and $\ell=10$ terms, resp.
%
{Explicit inspection (not reported here) shows that the solution 
of Example 5.2 in \cite{Powelletal2017} has about
200 singular values with magnitude above $10^{-6}$,} from which we deduce
that we cannot expect a very accurate approximate solution of small rank.

For this problem we also compare the performance with that
of the algorithm MultiRB from \cite{Powelletal2017}, briefly recalled
at the beginning of section~\ref{Numerical results}.
For this method, the final subspace dimension and the
final solution rank are reported; both are recorded a-posteriori. 
Our experimental results are shown in Table~\ref{ex_stoch:package},
with stopping tolerance $\mathtt{tol}=10^{-6}$ and ${\cal P}_1$ preconditioning with $\bA_1, \bB_1$,
as used in the literature for this problem.
As expected, \sscg\ requires a large value of {\tt maxrank} to converge
smoothly for the first problem. For smaller values, say {\tt maxrank}=100, the
convergence curve reached a plateau right above the requested  tolerance,
so that a slightly larger value of {\tt tol} would have ensured a successful
completion. 
Low memory allocations of the randomized strategy
show that our new approach is also superior to MultiRB, in
addition to {\sc r-nlcg}, while being quite effective in terms of running time,
for both cases. We should mention, however, that for
the first problem, {\tt maxrank}=125 produced a post-computed 
true residual norm larger than that obtained with MultiRB. Using
{\tt maxrank}=150 overcame the problem.
Comparisons with the most direct competitor
\tpcg\ are consistent with the previous results.
We highlight the particularly fast convergence of \sscg\ for the second
dataset, both in terms of number of iterations and running time.

\begin{table}
\footnotesize{
\begin{center}
\begin{tabular}{|c|c|c|r|r|r|r|r|r|}
	\hline
	Example & $n_A,  n_B$ &{\tt maxrank}  & {\sc r-nlcg} & MultiRB&  \tpcg\ & \sspcg\ & \sspcg\ \\
	($\ell=10$)&  &   &  & (spacedim/rank)&   & determ.  &  rand'zed \\
	\hline
	\cite[Ex.5.2]{Powelletal2017}  
	&16129, 1287 & 60 & -- ($100$) &  & -- ($100$) & -- ($100$) &  -- ($100$) \\ 
	&&125 & 67.54 ($38$)&  & 53.50 ($18$) &  19.55 ($12$)  & {\bf 11.83} ($13$) \\ 
	&&150 & 57.31 ($24$) &  &  66.88 ($14$) & 23.73 ($11$)  &  12.58 ($11$) \\ 
	&&  &   &  12.76 ($312/306$) &  &  &     \\ 
	\hline
	\cite[Ex.5.5]{Powelletal2017}
	&16129, 2002 & 25 & {$\star$5.58 ($28$)} &   & { 6.55} ($24$)  & -- ($100$) &  -- ($100$) \\ 
	& & 50 & 14.63 ($26$)  &  & 13.03 ($16$) & 4.89 ($\;\;8$) & {\bf 3.20} ($\;\;8$)\\
	&& 100 & 35.98 ($25$) &  & 37.11 ($16$) & 6.39 ($\;\;6$) & { 3.82} ($\;\;6$) \\
	&&  &  & 6.89 ($158/66$) &  &  & \\
	\hline
\end{tabular}	
\end{center}
\hskip 0.1in  -- no conv.  \hskip 0.3in $\star$ Final residual norm is {\it larger} than for other methods
	}
	\caption{Example~\ref{ex:stoch},  stochastic problem.
For each method, running time in seconds,
and in parenthesis the number of iterations. Stopping tolerance
{$\mathtt{tol}=10^{-6}$}. Best running times are in bold. For MultiRB the
the final approximation space dimension and the final solution rank are reported.
\label{ex_stoch:package}}

\end{table}

We also created  larger datasets for  the setting of
Example 5.1 in \cite{Powelletal2017}, using the {\sc s-ifiss}
package \cite{sifiss}; unless explicitly stated, all default values were
used to create the problem data. We  employed a finer spatial discretization
so that $n_A=65\,025$ (level 8), and used
$9$ random   variables with polynomial  degree $5$, so that $n_B=2002$, as above.
For this  setting  we used both  fast and slow decay  of  the
expansion coefficients; we refer to
\cite{Powelletal2017} for a discussion. 
Finally, we consider the `fast'  and `slow' decay problems with
$9$ random   variables and polynomial  degree $6$, yielding  $n_B=5005$.
All results with these newly created data are reported in Table~\ref{ex_stoch:new}.

In spite of the broader scenario, the results are consistent with
the previous ones, with the new method largely surpassing its more
direct competitors in all instances. The comparison with respect to
MultiRB is less clear cut, if only running time is considered, while overall
the new method requires less memory. We conclude with a comment on the
expected behavior for larger $n_B$ on this problem. Since MultiRB provides
no reduction in the stochastic variable, the costs of the method will 
significantly increase with $n_B$, whereas we expect less dramatic effects
on the new method.

Finally, in Table~\ref{ex_stoch:comparison} we test the performance
of the memory-saving dynamic residual
computation described in section~\ref{sec:truncation} with respect to the full
computation of the residual factor, on this last dataset. 
Although the number of iterations seems to not have been affected, this is not
so for the running time, which in many cases more than doubles.
Similar results were obtained with the previous examples whenever $\ell$ was large.
Summarizing, and
given the especially good behavior of the randomized memory-saving strategy
we have devised, we do not advocate using the dynamic strategy, at least for
the classes  of problems we have tested it.

}
\end{example}

\begin{table}
\footnotesize{
\begin{center}
\begin{tabular}{|c|c|c|r|r|r|r|r|r|}
	\hline
	$n_A, n_B$& decay & {\tt maxrank}  & {\sc r-nlcg}&  MultiRB & \tpcg\ & \sspcg\ &  \sspcg\  \\
	&  &   & & (spacedim/rank)&   & determ.  & rand'zed \\
	\hline
	65025, 2002&fast &20 & {$\star$50.39 ($29$)} &  & -- ($100$) &-- ($100$) &-- ($100$)\\ 
	&      &30 & 82.81 ($27$) &  & 27.65 ($16$)  & 10.61 ($10$) &  {\bf 8.29} ($11$) \\ 
	&      &40 & 111.98 ($27$) & & 38.02 ($16$)  & 13.54 ($\;\;9$) &  {9.30} ($\;\;9$)  \\ 
	&      & &  &{21.02} (${159}/66$) &   &  &  \\ 
	\hline
	65025, 2002&slow &40 &  {$\star$197.97 ($45$)} &  & 59.22 ($24$) &  -- ($100$) &  -- ($100$)\\ 
	&      &50 &  194.77 ($33$) &  & 41.26 ($12$) & 21.25 ($\;\;9$) &  { 13.05} ($\;\;9$) \\ 
	&      &60 & 187.93 ($24$)  & & 48.27 ($11$) & 22.89 ($\;\;8$)  &  15.34 ($\;\;8$) \\ 
	&      & &   & {\bf 10.04} (${124}/101$)&  &   &  \\ 
	\hline
	65025, 5005&fast  &20 & {$\star$29.21 ($37$)} &   &-- ($100$) &-- ($100$) & -- ($100$)\\ 
	&  &30 & 33.06 ($25$) &  & 38.09 ($19$) & 14.79 ($13$) &  { 11.02} ($14$) \\ 
	&  &40 & 44.27 ($25$) &  &  42.38 ($17$) & 15.70 ($10$) &  {\bf 10.66} ($10$)  \\ 
	&  & & &29.79 ($159/72$)  & &  &    \\ 
	\hline
	65025, 5005&slow &40 & {$\star$39.17 ($22$)}  &  & 162.436 ($56$) &  -- ($100$)  & -- ($100$) \\ 
	&      &50 & 45.52 ($19$) &  & 50.15 ($13$) & 24.41 ($10$) &  { 14.79} ($10$) \\ 
	&      &60 &  59.57 ($18$) & & 55.02 ($12$) & 33.15 ($\;\;9$) &  21.39 ($\;\;9$) \\ 
	&      & & &{\bf 13.32 ($133/107$)}  & &  &   \\ 
	\hline
\end{tabular}
\end{center}
\hskip 0.2in  -- no conv. \hskip 0.3in $\star$ Final residual norm is {\it larger} than for other methods
}
\caption{Example \ref{ex:stoch},  stochastic problem, large dimensions. For each method,
running time in seconds, and in parenthesis the number of iterations.
 For MultiRB, in parenthesis are approximation space dimension and final solution rank.
Stopping tolerance
{$\mathtt{tol}=10^{-6}$}. Best running times are in bold. \label{ex_stoch:new} }
\end{table}

\begin{table}
{
\footnotesize{
\begin{center}
\begin{tabular}{|c|c|c|r|r||c|c|c|r|r|}
	\hline
	$n_A,$ & decay & {\tt maxrank}   & \sspcg\ & \sspcg\    &
	$n_A,$ & decay & {\tt maxrank}   & \sspcg\ & \sspcg\   \\
	$n_B$&  &     & determ.  & dyn.  &
	$n_B$&  &     & determ.  & dyn.  \\
	\hline
	65025, & fast       &30 &  10.61 ($10$) & 30.76 ($10$) & 
	65025, & fast   &30 &   14.79 ($13$) & 47.33 ($13$) \\
	2002&       &40 &  13.54 ($\;\;9$) &  36.31  ($\;\;9$)   &
	5005&   &40 &   15.70 ($10$) &  47.63 ($10$)  \\
	\hline
	& slow      &50 &  21.25 ($\;\;9$) & 61.41 ($\;\;9$) & 
	& slow      &50 &   24.41 ($10$) &  66.93 ($10$) \\
	&       &60 &  22.89 ($\;\;8$) & 65.44 ($\;\;8$) &
	&       &60 &   33.15 ($\;\;9$) &  70.77 ($\;\;9$) \\
	\hline
\end{tabular}
\end{center}
}
\caption{Example \ref{ex:stoch},  stochastic problem, large dimensions. Comparison
between storing the whole residual factor and dynamically  updating its truncated
QR factorization. For each method,
running time in seconds, and in parenthesis the number of iterations.
Stopping tolerance {$\mathtt{tol}=10^{-6}$}.\label{ex_stoch:comparison} }
}
\end{table}

\section{Conclusions}\label{Conclusions}
We have proposed a new iterative method for solving multiterm matrix equations
with symmetric and positive definite operator. The method generates
a sequence of approximate solutions by locally minimizing a functional
over a subspace that is allowed to grow up to a desired threshold.
The derivation closely follows that of matrix-oriented Conjugate Gradients on the
Kronecker form, without being affected by the same dramatic loss of optimality.
By using particularly convenient randomized range-finding strategies, 
the method is able to ensure low memory requirements.
Our numerical experiments have shown that the new method is computationally
robust and competitive with respect to state-of-the-art methods in addition to {\sc tcg},
on quite diverse application problems.

The matlab code of \sscg\ \rev{is} available
at 
{\tt https://github.com/palittaUniBO}.

\section*{Acknowledgments}
\rev{The authors thank the reviewers for their careful 
reading of the manuscript and their insightful comments.}
All authors are members of the INdAM Research
Group GNCS. Moreover, their work
was partially supported by the European Union - NextGenerationEU under the National Recovery and Resilience Plan (PNRR) - Mission 4 Education and research
- Component 2 From research to business - Investment 1.1 Notice Prin 2022 - DD N. 104 of 2/2/2022,
entitled “Low-rank Structures and Numerical Methods in Matrix and Tensor Computations and their
Application”, code 20227PCCKZ – CUP J53D23003620006.

\bibliographystyle{siamplain}
\bibliography{references}
\section*{Appendix}

In this Appendix we 
report in Algorithm~\ref{alg:Tres} the low-rank truncation 
scheme for the residual matrix presented in section~\ref{sec:truncation}.

\begin{algorithm}[hbt]
{\footnotesize
\begin{algorithmic}[1]
\smallskip
\Statex \textbf{Input:} Operator
$\mathcal L:\mathbb{R}^{n_A\times n_B}\rightarrow \mathbb{R}^{n_A\times n_B}$,
factors $C_1$, $C_2$ $(\bC=C_1C_2^{\T})$,
factors of current approx sol'n $X^l$, $X^r$, $\bm{\tau}$
$(\bX=X^l\bm{\tau}(X^r)^{\T})$, matrices $\Omega$ and $\Pi$,
 truncation tolerances {\tt tolrank, maxrank}.
\Statex \textbf{Output:} Approximate
factors  $R^l$, $R^r$, $\bm{\rho}$ to the residual matrix $\bC-\mathcal{L}(\bX)$
\smallskip

\State (Short-hand not'n:
{\footnotesize$A_\star\bullet X^l = [A_1X^l,\ldots,A_\ell X^l]$,
$B_\star\bullet X^r = [B_1X^r,\ldots,B_\ell X^r]$,
${\bm D}=\text{blkdiag}(I_{s_C},-\bm{\tau},\ldots,-\bm{\tau})$})
\If{$\Omega=\Pi=\emptyset$}
\State $[R^l,\bm{\rho},R^r]=
\mathcal{T}([C_1, A_\star\bullet X^l], {\bm D},
[C_2,B_\star \bullet X^r], \mathtt{tolrank}, \mathtt{maxrank})$
\Else
\State Compute skinny QRs$^{(\dagger)}$
{\footnotesize
$$
[Q,*]=\mbox{\sc{qr}}([C_1,A_\star\bullet X^l]
\left({\bm D}
([C_1,B_\star \bullet X^r]^{\T}\Omega)\right)), \quad
 [G, *]=\mbox{{\sc qr}}([C_2,B_\star\bullet X^r]
\left({\bm D}
([C_1,A_\star\bullet X^l]^{\T}\Pi)\right))
$$
}
\State Compute truncated SVD based on {\tt tolrank} and {\tt maxrank}:
{\scriptsize 
$
\quad U\Sigma V\approx Q^{\T}[C_1,A_\star\bullet X^l] {\bm D}
[C_2,B_\star \bullet X^r]^{\T}G
$
}
\State Set $R^l=QU$, $\bm{\rho}=\Sigma$, $R^r=GV$
\EndIf
8
\end{algorithmic}
}
\caption{$\mathcal{T}_{res}$ - truncation procedure for the residual matrix}\label{alg:Tres}
${}^{(\dagger)}$
\footnotesize{The product
$[C_2,B_\star \bullet X^r]^{\T}\Omega$ is performed one block at the
time, so as not to explicitly form
$B_\star \bullet X^r$. The same for all other similar products in the
algorithm.}

\end{algorithm}

\end{document}